\theoremstyle{plain}
\newtheorem{Thm}{Theorem}
\newtheorem{lem}{Lemma}
\newtheorem{prop}{Proposition}
\newtheorem{dfn}{Definition}
\newtheorem{rem}{Remark}
\newcommand{\q}{{q(\cdot)}}
\newcommand{\p}{{p(\cdot)}}
\newcommand{\vek}[1]{\boldsymbol{#1}}
\newcommand{\skalpro}[2]{\left\langle\,#1,#2\right\rangle}
\newcommand{\norm}[2]{\left\|\left.{#1}\right|{#2}\right\|}
\newcommand{\schatten}[1]{\mathbb{#1}}
\newcommand{\R}{\schatten{R}}
\newcommand{\C}{\schatten{C}}
\newcommand{\N}{\schatten{N}}
\newcommand{\Sn}{\mathcal{S}(\R^n)}
\newcommand{\SSn}{\mathcal{S}'(\R^n)}
\newcommand{\SSo}{\mathcal{S}'(\Omega)}
\newcommand{\Do}{\mathcal{D}(\Omega)}
\newcommand{\DSo}{\mathcal{D}'(\Omega)}
\newcommand{\Rn}{{\R^n}}
\newcommand{\Bn}{B^{s}_{pq}(\R^n)}
\newcommand{\Fn}{F^{s}_{pq}(\R^n)}
\newcommand{\Bwpxqx}{{B^{\vek{w}}}_{\!\!\!\!\!\!\!p(\cdot),q(\cdot)}(\R^n)}
\newcommand{\Awpxqx}{{A^{\vek{w}}}_{\!\!\!\!\!\!\!p(\cdot),q(\cdot)}(\R^n)}
\newcommand{\Fsxpxqx}{{F^{s(\cdot)}}_{\!\!\!\!\!\!\!\!\!\!\!p(\cdot),q(\cdot)}(\R^n)}
\newcommand{\Bsxpxqx}{{B^{s(\cdot)}}_{\!\!\!\!\!\!\!\!\!\!\!p(\cdot),q(\cdot)}(\R^n)}
\newcommand{\Fwpxqx}{{F^{\vek{w}}}_{\!\!\!\!\!\!\!p(\cdot),q(\cdot)}(\R^n)}
\newcommand{\Fo}{{F^{\vek{w}}}_{\!\!\!\!\!\!\!p(\cdot),q(\cdot)}(\Omega)}
\newcommand{\Bo}{{B^{\vek{w}}}_{\!\!\!\!\!\!\!p(\cdot),q(\cdot)}(\Omega)}
\newcommand{\Ao}{{A^{\vek{w}}}_{\!\!\!\!\!\!\!p(\cdot),q(\cdot)}(\Omega)}
\newcommand{\Plog}{\mathcal{P}^{\log}}
\newcommand{\Konst}{\mathrm{C}}
\newcommand{\mgk}{\mathcal{W}^\alpha_{\alpha_1,\alpha_2}}
\newcommand{\tah}{^\vee}               
\newcommand{\supp}{\operatorname{supp}}
\newcommand{\esssup}{\operatornamewithlimits{ess-sup}}
\newcommand{\essinf}{\operatornamewithlimits{ess-inf}}
\begin{document}
\author{Henning Kempka}
\date{\today}
\title[Intrinsic characterization and the extension operator]{Intrinsic characterization and the extension operator in variable exponent function spaces on special Lipschitz domains}
\begin{abstract}
We study 2-microlocal Besov and Triebel-Lizorkin spaces with variable exponents on special Lipschitz domains $\Omega$. These spaces are as usual defined by restriction of the corresponding spaces on $\Rn$. In this paper we give two intrinsic characterizations of these spaces using local means and the Peetre maximal operator. Further we construct a linear and bounded extension operator following the approach done by Rychkov in \cite{RychExt}, which at the end also turns out to be universal.
\end{abstract}
\maketitle
\section{Introduction}
In this paper we study Besov $\Bo$ and Triebel-Lizorkin spaces $\Fo$ with variable exponents on special Lipschitz domains $\Omega\subset\Rn$, where
\begin{align*}
\Omega=\{(x',x_n)\in\Rn: x_n>\omega(x')\}
\end{align*}
{for a Lipschitz continuous function }$\omega:\R^{n-1}\to\R$.
 Here the variable integrability is defined with measurable functions $\p$ and $\q$ and the variable smoothness is defined in the 2-microlocal sense using admissible weight sequences $\vek{w}=(w_j)_{j\in\N_0}$, see Section \ref{sec:2} for details.\\
Spaces of this type on $\Rn$ have first been considered by Diening, H\"ast\"o and Roudenko in \cite{DHR} by the author in \cite{Kem09}. With also $\q$ variable in the B-case they have been studied by Almeida and H\"ast\"o in \cite{AH} and by the author and Vybiral in \cite{KemV12}.\\
In this paper we obtain intrinsic characterizations of $\Bo$ and $\Fo$ using local means and the Peetre maximal operator. 
Furthermore, a linear and bounded extension operator from the spaces on $\Omega$ to the spaces on $\Rn$ is constructed. 
In the whole work we rely very much on the paper of Rychkov \cite{RychExt} where the same results have been shown for fixed exponents, i.e. $\p=p$, $\q=q$ as constants and $w_j(x)=2^{js}$ with $s\in\R$. Surprisingly, all results remain also true in the variable setting. We refer again to \cite{RychExt} on an exhaustive history of such results.

For variable exponents there are not so many results on intrinsic characterizations and on the extension operator known. An intrinsic characterization for our spaces has been provided in \cite{GonKem} with the help of non smooth atomic characterizations. This approach also works for more general domains than special Lipschitz domains. \\
If $\p=p$ and $\q=q$ are constants, then intrinsic characterizations and an extension operator has been presented by Tyulenev in \cite{Sascha} in the Besov space scale. This work also modified the proofs from Rychkov \cite{RychExt}, but the focus in \cite{Sascha} lies on more general domains and on more general weight sequences where also Muckenhoupt weights are allowed as variable smoothness functions.\\
Further, in \cite{DH} Diening and H\"ast\"o constructed with mollifiers an extension operator for the Sobolev spaces $W^1_\p=F^1_{\p,2}$ from the halfspace to $\Rn$.
  
The paper is structured as follows. We introduce in Section \ref{sec:2} the necessary notation and the Besov and Triebel-Lizorkin spaces $\Bwpxqx$ and $\Fwpxqx$ with variable exponents on $\Rn$. Further, we present there the important local means characterization for these spaces. In Section \ref{sec:3}, we introduce special Lipschitz domains and introduce the spaces $\Bo$ and $\Fo$ as usual by restrictions from the corresponding spaces on $\Rn$. Section \ref{sec:4}  contains the main results of this paper. Here we prove an intrinsic characterization using local means and define a linear and bounded extension operator on $\Bo$ and $\Fo$. This is complemented by Section \ref{sec:5}, where an universal extension operator $\mathcal{E}_u$ is constructed. Here the operator is not depending on the functions $\p,\q$ and the paramters of the weight sequence $\alpha, \alpha_1$ and $\alpha_2$.
\section{Preliminaries}\label{sec:2}
First of all, we introduce all necessary notation. As usual, we denote by  $\Rn$ the n-dimensional Euclidean space, $\N$ denotes the set of natural numbers and we set $\N_0=\N\cup\{0\}$. We write $\eta\approx\xi$ if there exist two constants $c_1,c_2> 0$ with $c_1\eta\leq\xi\leq c_2\eta$.\\
Please be aware that $c>0$ is an universal constant and can change its value from one line to another but is never depending on any variables used in the estimates, except it is clearly noted. The Schwartz space $\Sn$ is the set of all infinitely often differentiable functions on $\Rn$ with rapid decay at infinity. Its topology is generated by the seminorms
\begin{align*}
\|\Phi\|_{k,l}=\sup_{x\in\Rn}(1+|x|)^k\sum_{|\beta|\leq l}|D^\beta\Phi(x)|.
\end{align*}
For a function $\Phi\in\Sn$ we denote by $L_\Phi\in\R$ the number moment conditions the function provides, i.e. $L_\Phi$ is the highest number with
\begin{align}\label{MomentCond}
	\int_\Rn x^\beta \Phi(x)dx=0\quad\text{with }|\beta|<L_\Phi.
\end{align} 
Please note, that for $L_\Phi\leq0$ the function $\Phi$ does not have any moment condition. If not otherwise stated, we define for a function $\Phi\in\Sn$ the dyadic dilates by $\Phi_j(x)=2^{jn}\Phi(2^jx)$ for $j\in\N$ and any $x\in\Rn$. We remark that $\Phi_0$ is not covered by the construction above because it is usually realized with a different function $\Phi_0$ which has different properties compared to $\Phi$.

\subsection{Besov and Triebel-Lizorkin spaces with variable exponents}\label{Sec:SpacesonRn}
Here we introduce the spaces which we are interested in. We study Besov and Triebel-Lizorkin spaces with variable integrability and variable smoothness. We take advantage of the concept of admissible weight sequences to define the variable smoothness.
\begin{dfn}
For fixed real numbers $\alpha\geq0$ and $\alpha_1\leq\alpha_2$ the class of admissible weights $\mgk$ is the collection of all positive weight sequences $\vek{w}=(w_j)_{j\in\N_0}$ on $\Rn$ with:
\begin{enumerate}
	\item There exists a constant $\Konst>0$ such that for fixed $j\in\N_0$ and arbitrary $x,y\in\Rn$
	\begin{align*}
	0<w_j(x)\leq\Konst w_j(y)(1+2^j|x-y|)^\alpha;
	\end{align*}
	\item For any $x\in\Rn$ and any $j\in\N_0$ we have
	\begin{align*}
	2^{\alpha_1}w_j(x)\leq w_{j+1}(x)\leq 2^{\alpha_2}w_j(x).
	\end{align*}
\end{enumerate}
\end{dfn} 
Before introducing the function spaces under consideration we still need to recall some notation. By $\mathcal{S}(\Rn)$ we denote the Schwartz space of all complex-valued rapidly decreasing infinitely differentiable functions on $\Rn$ and by $\mathcal{S}'(\Rn)$ the dual space of all tempered distributions on $\Rn$. For $f\in \mathcal{S}'(\Rn)$ we denote by $\widehat{f}$ the Fourier transform of $f$ and by $f^{\vee}$ the inverse Fourier transform of $f$. \\

Let $\varphi_0\in\mathcal{S}(\Rn)$ be such that
\begin{equation}  \label{phi}
  \varphi_0(x)=1 \quad \mbox{if}\quad |x|\leq 1 \quad \mbox{and} \quad \supp \varphi_0 \subset \{x\in\Rn: |x|\leq 2\}.
\end{equation}
Now define $\varphi(x):=\varphi_0(x)-\varphi_0(2x)$ and set $\varphi_j(x):=\varphi(2^{-j}x)$ for all $j\in\N$. Then the sequence $(\varphi_j)_{j\in\N_0}$ forms a smooth dyadic decomposition of unity, which means
\begin{align*}
\sum_{j=0}^\infty\varphi_j(x)=1\quad\text{for all }x\in\Rn.
\end{align*}
For an open set $\Omega\subset\Rn$ we denote by $\mathcal{P}(\Omega)$ the class of exponents, which are measurable functions $p:\Omega\rightarrow (c,\infty]$ for some $c>0$. Let $p \in \mathcal{P}(\Omega)$, then $p^+:=\esssup_{x\in \Omega}p(x)$ and $p^-:=\essinf_{x\in \Omega}p(x)$. The set $L_{p(\cdot)}(\Omega)$ is the variable exponent Lebesgue space, which consists of all measurable functions $f$ such that for some $\lambda>0$ the modular $\varrho_{{p(\cdot)}}(f/\lambda)$ is finite. The modular is defined by
$$
\displaystyle \varrho_{p(\cdot)}(f):=\int_{\Omega_0 } |f(x)|^{p(x)}\, dx + \text{ess-sup}_{x\in \Omega_{\infty}} |f(x)|.
$$
Here $\Omega_{\infty}$ denotes the subset of $\Omega$ where $p(x)=\infty$ and $\Omega_0= \Omega \setminus \Omega_{\infty}$. The Luxemburg (quasi-)norm of a function $f\in L_{p(\cdot)}(\Omega)$ is given by
$$
\norm{f}{L_{p(\cdot)}(\Omega)}:=\inf\left\{\lambda>0:\varrho_{{p(\cdot)}}\left(\frac{f}{\lambda}\right)\leq 1 \right\}.
$$

In order to define the mixed spaces $\ell_{\q}(L_{\p}(\Omega))$, we need to define another modular. For $p,q \in \mathcal{P}(\Omega)$ and a sequence $(f_{\nu})_{\nu \in \N_0}$ of complex-valued Lebesgue measurable functions on $\Omega$, we define
\begin{align} \label{modular_mixed}
  \varrho_{\ell_{\q}(L_{\p})}(f_{\nu}) = \sum_{\nu=0}^{\infty} \inf\left\{\lambda_{\nu}>0 : \varrho_{\p}\left(\frac{f_{\nu}}{\lambda_{\nu}^{1/\q}} \right)\leq 1 \right\}.
\end{align}
If $q^+ <\infty$, then we can replace \eqref{modular_mixed} by the simpler expression
\begin{align} \label{modular_mixed_norm}
  \varrho_{\ell_{\q}(L_{\p})}(f_{\nu}) = \sum_{\nu=0}^{\infty} \Big\| |f_{\nu}|^{\q} \mid L_{\frac{\p}{\q}}(\Omega) \Big\|.
\end{align}
The (quasi-)norm in the $\ell_{\q}(L_{\p}(\Omega))$ spaces is defined as usual by
\begin{align*}
  \| f_{\nu} \mid \ell_{\q}(L_{\p}(\Omega)) \| = \inf \left\{ \mu>0 : \varrho_{\ell_{\q}(L_{\p})}\left(\frac{f_{\nu}}{\mu}\right) \leq 1\right\}.
\end{align*}

For the sake of completeness, we state also the definition of the space $L_{\p}(\ell_{\q}(\Omega))$. At first, one just takes the norm $\ell_{\q}$ of $(f_{\nu}(x))_{\nu \in \N_0}$ for every $x \in \Omega$ and then the $L_{\p}$-norm with respect to $x \in \Omega$, i.e.
\begin{align*}
  \norm{f_{\nu}}{L_{\p}(\ell_{\q}(\Omega))} = \left\| \left(\sum_{\nu=0}^{\infty} |f_{\nu}(x)|^{q(x)} \right)^{1/q(x)} \mid L_{\p}(\Omega)\right\|. 
\end{align*}
Finally, we also give the definition of smoothness spaces for the exponents. To prove results for the spaces under consideration, like characterizations or the independence of the decomposition of unity, we need this extra regularity for the exponents.
\begin{dfn}
 Let $g\in C(\Omega)$ be a continuous function on $\Omega$.

\begin{enumerate}
\item We say that $g$ is \emph{locally $\log$-H\"older continuous}, abbreviated $g\in C^{\log}_{loc}(\Omega)$,
if there exists $c_{\log}(g)>0$ such that
\begin{align*}
|g(x)-g(y)|\leq\frac{c_{\log}(g)}{\log(e+{1}/{|x-y|})}
\end{align*}
holds for all $x,y\in\Omega$.
\item
We say that $g$ is \emph{globally $\log$-H\"older continuous}, abbreviated $g\in C^{\log}(\Omega)$, if $g$ is
locally $\log$-H\"older continuous and there exists $g_\infty\in\R$ such that
\begin{align*}
|g(x)-g_\infty|\leq\frac{c_{\log}}{\log(e+|x|)}
\end{align*}
holds for all $x\in\Omega$.
\end{enumerate}
\end{dfn}
The logarithmic H\"older regularity classes turned out to be sufficient to have the boundedness of the Hardy-Littlewood maximal operator on $L_\p(\Omega)$ and for further properties we refer to \cite{DHHR} for details. We denote by $p\in\Plog$ any exponent $p\in\mathcal{P}(\Omega)$ with $0<p^-\leq p^+\leq\infty$ and $1/\p\in C^{\log}(\Omega)$.
\begin{rem}
The class $\Plog$ is denoted without underlying class $\Omega$. Having an exponent in $\mathcal{P}(\Rn)$ with $1/p\in C^{\log}(\Rn)$, we can always restrict it to an exponent on $\Omega$. Further by \cite[Proposition 4.1.7]{DHHR} we can always extend an exponent $p\in\mathcal{P}(\Omega)$ with $1/p\in C^{\log}(\Omega)$ to an exponent $\widetilde{p}\in\mathcal{P}(\Rn)$ with $1/\widetilde{p}\in C^{\log}(\Rn)$ without changing the numbers $p^+,p^-,p_\infty$ and $c_{\log}(1/p)$.\\
So, in abuse of notation we always write $p\in\Plog$ and mean either the exponent on $\Rn$ or on $\Omega$, which share in any case the same properties. 
\end{rem}

Now, we are ready to give the definition of the variable exponent spaces which we are interested in.
\begin{dfn}\label{Def:spaces}
 Let $p,q\in\Plog$, $(w_j)_{j\in\N_0}\in\mgk$ and $(\varphi_j)_{j\in\N_0}$ a smooth decomposition of unity. 
 \begin{enumerate}
  \item The variable Besov space $\Bwpxqx$ is the collection of all $f\in\SSn$ with
  \begin{align*}
   \norm{f}{\Bwpxqx}:=\norm{\left(w_j(\cdot)\left(\varphi_j\hat{f}\right)\tah(\cdot)\right)_{j\in\N_0}}{\ell_\q(L_\p(\Rn))}<\infty.
  \end{align*}
  \item For $p^+,q^+<\infty$ the variable Triebel-Lizorkin space $\Fwpxqx$ is the collection of all $f\in\SSn$ with
  \begin{align*}
   \norm{f}{\Fwpxqx}&:=\norm{\left(\sum_{j=0}^\infty|w_j(\cdot)\left(\varphi_j\hat{f}\right)\tah(\cdot)|\q\right)^{1/\q}}{L_\p(\Rn)}\\
   &=\norm{\left(w_j(\cdot)\left(\varphi_j\hat{f}\right)\tah(\cdot)\right)_{j\in\N_0}}{L_\p(\ell_q(\Rn))}.
  \end{align*}
 \end{enumerate}
\end{dfn}
For brevity we write $\Awpxqx$ where either $A=B$ or $A=F$.\\
First definitions of these spaces have been given in \cite{Kem09} and with $\q$ also variable in the Besov case in \cite{KemV12}. Furthermore, there already exist a lot of characterizations of these scales of spaces: namely by local means in \cite{Kem09}, by atoms, molecules and wavelets in \cite{Kem10} and \cite{Drihem}, by ball means of differences in \cite{KemV12} and recently by non-smooth atoms in \cite{KemGon}. If one chooses $0<p,q\leq\infty$ as constants and sets $w_j(x)=2^{js}$ with $s\in\R$ then one recovers the usual Besov and Triebel-Lizorkin spaces $\Bn$ and $\Fn$ studied in great detail in \cite{Tri1}, \cite{Tri2} and \cite{Tri3}.\\
Furthermore, by choosing the weight sequence as $w_j(x)=2^{js(x)}$ with $s\in C_{loc}^{\log}(\Rn)$ we obtain the scales of Besov and Triebel-Lizorkin spaces with variable smoothness and integrability $\Bsxpxqx$ and $\Fsxpxqx$ which have been studied in \cite{DHR} and \cite{AH}.
\subsection{Local means characterization}
Our approach to obtain intrinsic characterizations and an extension operator for $\Bo$ and $\Fo$ for an special Lipschitz domain $\Omega\subset\Rn$ heavily relies on the characterization by local means. To this end, we repeat this characterization for our spaces under consideration from \cite{Kem09} and \cite{KemV12}.
The crucial tool will be the Peetre maximal operator which  assigns to each system
$(\Psi_k)_{k\in\N_0}\subset\Sn$, to each distribution $f\in\SSn$
and to each number $a>0$ the following quantities
\begin{align}\label{_PO_modifiziert}
(\Psi_k^*f)_a(x):=\sup_{y\in\R^n}\frac{|(\Psi_k\ast f)(y)|}{(1+|2^k(y-x)|)^{a}},\quad x\in\R^n
\text{ and }k\in\N_0.
\end{align}

We start with two given functions $\Psi_0,\Psi_1\in\Sn$. We define 
\begin{align*}
\Psi_j(x)=2^{(j-1)n}\Psi_1(2^{(j-1)}x),\quad\text{for $x\in\R^n$ and $j\in\N$.}
\end{align*}
The local means characterization for $\Bwpxqx$ and $\Fwpxqx$ from \cite{KemV12} and \cite{Kem09} then reads.
\begin{prop}\label{Prop:LM}
Let $\vek{w}=(w_k)_{k\in\N_0}\in\mgk$, $p,q\in\Plog$ and let $a>0$, $R\in\N_0$ with $R>\alpha_2$.
Further, let $\Psi_0,\Psi_1$ belong to $\Sn$ with
\begin{align}
\int_\Rn x^\beta \Psi_1(x)dx=0,\quad\text{ for }0\leq|\beta|< R,\label{_LM_MomentCond}
\end{align}
and
\begin{align}
|\hat{\Psi}_0(x)|&>0\quad\text{on}\quad\{x\in\R^n:|x|<\varepsilon\}\label{_LM_Tauber1,5}\\
|\Hat{\Psi}_1(x)|&>0\quad\text{on}\quad\{x\in\R^n:\varepsilon/2<|x|<2\varepsilon\}\label{_LM_Tauber2,5}
\end{align} 
for some $\varepsilon>0$.\\
\begin{enumerate}\item For $a>\frac{n+c_{log}(1/q)}{p^-}+\alpha$ and all $f\in\SSn$ we have
\begin{align*}
\norm{f}{\Bwpxqx}\approx\norm{(\Psi_k\ast f)w_k}{\ell_\q(L_\p(\Rn))}\approx\norm{(\Psi_k^*f)_aw_k}{\ell_\q(L_\p(\Rn))}.
\end{align*}
\item	For $a>\frac{n}{\min(p^-,q^-)}+\alpha$ and all $f\in\SSn$ we have 
\begin{align*}
\norm{f}{\Fwpxqx}\approx\norm{w_k(\Psi_k\ast f)}{L_\p(\ell_\q(\Rn))}\approx\norm{w_k(\Psi_k^*f)_a}{L_\p(\ell_\q(\Rn))}.
\end{align*}
\end{enumerate}
\end{prop}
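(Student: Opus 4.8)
The plan is to establish the displayed equivalences by the familiar three-step scheme, which runs in parallel for the $B$- and the $F$-scale, only the concluding vector-valued estimate being case-dependent; throughout, ``mixed quasi-norm'' abbreviates $\ell_\q(L_\p(\Rn))$ in the Besov case and $L_\p(\ell_\q(\Rn))$ in the Triebel--Lizorkin case. First I would record the trivial pointwise bound $|(\Psi_k\ast f)(x)|\le(\Psi_k^*f)_a(x)$, obtained by taking $y=x$ in \eqref{_PO_modifiziert}; since $w_k>0$ this gives at once $\norm{(\Psi_k\ast f)w_k}{\ell_\q(L_\p(\Rn))}\le\norm{(\Psi_k^*f)_aw_k}{\ell_\q(L_\p(\Rn))}$, and likewise with $L_\p(\ell_\q(\Rn))$. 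Hence everything reduces to bounding the mixed quasi-norm of $(\Psi_k^*f)_aw_k$ from above by $\norm{f}{\Awpxqx}$, and this I would obtain from a single general comparison estimate: for any two systems $(\Phi_k)_{k\in\N_0}$, $(\Psi_k)_{k\in\N_0}$ fulfilling the hypotheses of the proposition, the mixed quasi-norm of $(\Psi_k^*f)_aw_k$ is bounded by $c$ times that of $(\Phi_k\ast f)w_k$. Indeed, the system $(\varphi_j^\vee)_{j\in\N_0}$ attached to the decomposition of unity satisfies those hypotheses --- the building block $\varphi^\vee$ has vanishing moments of every order, its Fourier transform $\varphi$ being supported away from the origin, and the Tauberian conditions \eqref{_LM_Tauber1,5}--\eqref{_LM_Tauber2,5} hold by construction --- so, once the comparison estimate is at hand, pairing $(\Psi_k)$ with $(\varphi_j^\vee)$ in both directions and combining with the trivial bound closes the whole chain.

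For the comparison estimate I would follow Rychkov \cite{RychExt}: from the Tauberian conditions one builds $\lambda_0,\lambda_1\in\Sn$ with $\lambda_1$ carrying as many vanishing moments as wanted, such that the reproducing identity $f=\sum_{k\ge0}\lambda_k\ast\Phi_k\ast f$ holds in $\SSn$, where $\lambda_k$ is the standard dyadic dilate. Then $\Psi_m\ast f=\sum_{k\ge0}(\Psi_m\ast\lambda_k)\ast(\Phi_k\ast f)$, and the moment condition of order $R$ on $\Psi_1$ combined with the moments of $\lambda_1$, fed into Taylor's formula, yields for every prescribed $N,S$ the kernel bound
\begin{align*}
|(\Psi_m\ast\lambda_k)(z)|\le c\,2^{-|m-k|N}\,2^{\min(m,k)n}\bigl(1+2^{\min(m,k)}|z|\bigr)^{-S}.
\end{align*}
Fixing $0<r\le1$ and using subadditivity of $t\mapsto t^r$, one arrives at the pointwise majorant
\begin{align*}
\bigl((\Psi_m^*f)_a(x)\bigr)^r\le c\sum_{k\ge0}2^{-|m-k|Nr}\,2^{|m-k|n}\,2^{kn}\!\int_\Rn\frac{|(\Phi_k\ast f)(y)|^r}{(1+2^k|x-y|)^{ar}}\,dy.
\end{align*}
Multiplying by $w_m(x)^r$ and pushing the weight inside the integral via admissibility~(i), $w_m(x)\le c\,w_m(y)(1+2^m|x-y|)^\alpha$, and admissibility~(ii), $w_m(y)\le c\,2^{|m-k|\alpha_2}w_k(y)$, the factor $(1+2^m|x-y|)^\alpha$ is absorbed by the kernel --- $ar$ being replaced by $(a-\alpha)r$, which still exceeds $n$ --- while $2^{|m-k|\alpha_2}$ is absorbed by $2^{-|m-k|Nr}$, this being legitimate precisely because $R>\alpha_2$ permits an arbitrarily large number $N$ of vanishing moments. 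As $(a-\alpha)r>n$, the surviving integral is dominated by $c\,\HLM\bigl(|w_k(\Phi_k\ast f)|^r\bigr)(x)$, so that
\begin{align*}
w_m(x)(\Psi_m^*f)_a(x)\le c\,\Bigl(\sum_{k\ge0}2^{-|m-k|\widetilde N}\,\HLM\bigl(|w_k(\Phi_k\ast f)|^r\bigr)(x)\Bigr)^{1/r}.
\end{align*}

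The final step is to take the mixed quasi-norm of this bound. In the $F$-case I would pick $r<\min(p^-,q^-)$, which is compatible with $a>\tfrac{n}{\min(p^-,q^-)}+\alpha$ since then $(a-\alpha)r>n$, and combine the $\ell_{\q/r}$-boundedness of convolution with the geometric sequence $(2^{-|j|\widetilde N})_j$ with the boundedness of $\HLM$ on $L_{\p/r}(\ell_{\q/r})$. In the $B$-case I would take $r<p^-$, apply the scalar Hardy--Littlewood maximal inequality on $L_{\p/r}$ for each fixed $k$, and then carry out the $\ell_\q$-summation of the geometrically decaying tails by the variable-exponent convolution inequality; it is this summation that consumes the additional $c_{\log}(1/q)$ in the threshold $a>\tfrac{n+c_{\log}(1/q)}{p^-}+\alpha$. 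All the maximal and convolution inequalities invoked here are at our disposal exactly because $p,q\in\Plog$. I expect the only genuinely delicate point to be the bookkeeping in the middle step: the moments of $\Psi_1$ and of $\lambda_1$ must be allocated so that the weight-growth factor $2^{|m-k|\alpha_2}$ is really killed --- which is what forces $R>\alpha_2$ --- while at the same time $(a-\alpha)r>n$ is retained after admissibility~(i) has spent part of the kernel's decay on the weight shift, and this is the structural origin of the ``$+\alpha$'' in both lower bounds for $a$.
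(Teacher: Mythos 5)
The paper does not prove Proposition \ref{Prop:LM} at all --- it is quoted verbatim from \cite{Kem09} and \cite{KemV12} --- so your sketch can only be measured against the method of those references, and in architecture it matches them: trivial bound $|\Psi_k\ast f|\le(\Psi_k^*f)_a$, a two-system comparison via a Calder\'on reproducing formula built from the Tauberian conditions, the kernel estimate for $\Psi_m\ast\lambda_k$ from the moment conditions, the $r$-trick, and a vector-valued inequality to close. That is the right skeleton, and your diagnosis of where $R>\alpha_2$ and the ``$+\alpha$'' enter is correct.

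There is, however, one genuine flaw in the closing step. In the $F$-case you invoke ``the boundedness of $\HLM$ on $L_{\p/r}(\ell_{\q/r})$'': the Fefferman--Stein vector-valued maximal inequality is \emph{false} in $L_{\p}(\ell_{\q})$ when $q(\cdot)$ is genuinely variable (this is observed already in \cite{DHR}, and it is precisely why Lemma \ref{lem:Maxop} exists). In the $B$-case the same problem appears in disguise: once you have replaced the convolution $\int_\Rn|(\Phi_k\ast f)(y)|^r(1+2^k|x-y|)^{-ar}\,dy$ by $\HLM(|w_k(\Phi_k\ast f)|^r)(x)$, the convolution structure needed for the ``variable-exponent convolution inequality'' that is supposed to perform the $\ell_\q$-summation is gone, so the step as written does not go through. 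The repair is standard and is exactly what \cite{Kem09,KemV12} (and the paper itself, in the proof of Theorem \ref{Thm:LMo}) do: do \emph{not} pass to $\HLM$, but keep the majorant in the form $(\cdots)^r\ast\eta_{k,(a-\alpha)r}$ and apply Lemma \ref{lem:Maxop} directly, choosing $r$ with $\frac{n+c_{\log}(1/q)}{a-\alpha}<r<p^-$ in the $B$-case and $\frac{n}{a-\alpha}<r<\min(p^-,q^-)$ in the $F$-case; these choices are possible exactly under the stated lower bounds on $a$. A secondary, minor inaccuracy: the weight shift is $w_m\le c\,2^{(m-k)\alpha_2}w_k$ only for $m\ge k$, while for $k\ge m$ it is $w_m\le c\,2^{-(k-m)\alpha_1}w_k$; your uniform factor $2^{|m-k|\alpha_2}$ is wrong when $\alpha_1<-\alpha_2$. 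This costs nothing because in the direction $k\ge m$ the decay comes from the freely large number of moments of $\lambda_1$, whereas only the direction $m\ge k$ is capped at rate $R$ and hence needs $R>\alpha_2$ --- but the bookkeeping should be stated that way.
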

The local means characterization above easily gives that the norms in Definition \ref{Def:spaces} are independent of the chosen decomposition of unity $(\varphi_j)_{j\in\N_0}$.
\begin{rem}
 \label{rem:phi0zuLM}
\begin{enumerate}
	\item One can rewrite \eqref{_LM_MomentCond} also in $D^\beta\hat{\Psi}_1(0)=0$ for all $|\beta|<R$ or, using our notation, in $L_{\Psi_1}=R$.
	\item Later assertions are done with only one startfunction $\Phi_0\in\Do$ with $\int_\Rn\Phi_0(x)dx\neq0$. From that function one constructs $\Phi(x)=\Phi_0(x)-2^{-n}\Phi(x/2)$ and sets $\Phi_1(x)=2^n\Phi(2x)$.\\ Since $\Phi_0\in\Do\subset \Sn$ is smooth, we can find an $\varepsilon>0$ such that $|\hat{\Phi}_0(x)|>0\quad\text{on}\quad\{x\in\R^n:|x|<\varepsilon\}$ is satisfied. Further, also $\Phi_1\in\Sn$ fulfills $|\hat{\Phi}_1(x)|>0\quad\text{on}\quad\{x\in\R^n:\varepsilon/2<|x|<2\varepsilon\}$ and therefore \eqref{_LM_Tauber1,5} and \eqref{_LM_Tauber2,5} are fulfilled with $\Phi_0$ and $\Phi_1$ instead of the $\Psi_0$ and $\Psi_1$. This also shows, that we can take the functions $\Phi_j(x)=2^{jn}\Phi(2^jx)=2^{(j-1)n}\Phi_1(2^{j-1}x)$ and $\Phi_0$ as basic functions in Proposition \ref{Prop:LM}. 
\end{enumerate}
\end{rem}

\section{Function spaces on special Lipschitz domains}\label{sec:3}
We say that $\Omega\subset \Rn$ with $n\geq2$ is a special Lipschitz domain if it is open and there exists a constant $A>0$ with
\begin{align*}
 \Omega&=\{(x',x_n)\in\Rn: x_n>\omega(x')\}\\
 \intertext{and $\omega:\R^{n-1}\to\R$ is Lipschitz continuous}
 |\omega(x')-\omega(y')|&\leq A |x'-y'|. 
\end{align*}
The function spaces from Section \ref{Sec:SpacesonRn} can be used to define them on domains with the help of Definition \ref{Def:spaces} by restriction.\\
As ususal $\Do=C_0^\infty(\Omega)$ stands for the space of infinitely often differentiable functions with compact support in $\Omega$. Let $\DSo$ be the dual space of distributions on $\Omega$. For $g\in\SSn$ we denote by $g|_\Omega$ its restriction to $\Omega$,
\begin{align*}
 g|_\Omega:\quad(g|\Omega)(\varphi)=g(\varphi)\text{ for all }\varphi\in\Do.
\end{align*}
\begin{dfn}\label{Def:SpacesO}
 Let $\Omega\subset\Rn$ be a special Lipschitz domain as above. Let $p,q\in\Plog$, $(w_j)_{j\in\N_0}\in\mgk$ and $(\varphi_j)_{j\in\N_0}$ a smooth decomposition of unity. 
 \begin{enumerate}
  \item The variable Besov space $\Bo$ on $\Omega$ is the collection of all $f\in\DSo$ such that there exists a $g\in\Bwpxqx$ with $g|_\Omega=f$. Furthermore
  \begin{align*}
   \norm{f}{\Bo}:=\inf\left\{\norm{g}{\Bwpxqx}:g|_\Omega=f\right\}.
  \end{align*}
  \item For $p^+,q^+<\infty$ the variable Triebel-Lizorkin space $\Fo$ on $\Omega$ is the collection of all $f\in\DSo$ such that there exists a $g\in\Fwpxqx$ with $g|_\Omega=f$. Furthermore
  \begin{align*}
   \norm{f}{\Fo}:=\inf\left\{\norm{g}{\Fwpxqx}:g|_\Omega=f\right\}.
  \end{align*}
 \end{enumerate}
\end{dfn}
\begin{rem}
 Usually, one defines function spaces on bounded Lipschitz domains $\Omega$. Then one reduces the proofs and assertions by the localization procedure to special Lipschitz domains. This is done by covering $\partial\Omega$ by finitely many balls $B_j$ and using a decomposition of unity $\Phi_j$ which is adapted to the balls $B_j$. Finally, using pointwise multipliers and rotations (diffeomorphisms) all occurring tasks can be reduced to the case of special Lipschitz domains as described above, see \cite{RychExt} and \cite{Tridomains} for details.\\
 To the best of the authors knowledge there are no results on diffeomorphisms known if the exponents $\p,\q$ are not constant. So we concentrate our studies only on  special Lipschitz domains as above, and leave the case of bounded Lipschitz domains for further research. 
\end{rem}

\section{Intrinsic characterizations and the extension operator}\label{sec:4}
In this section we prove our main results. We give an intrinsic characterization of the spaces from Definition \ref{Def:SpacesO} with the help of an adapted Peetre maximal operator
\begin{align}\label{PeetreO}
 (\Phi_k^*f)^\Omega_a(x):=\sup_{y\in\Omega}\frac{|(\Phi_k\ast f)(y)|}{(1+|2^k(y-x)|)^{a}},\quad x\in\Omega
\text{ and }k\in\N_0.
\end{align}
Here $\Omega\subset\Rn$ with $n\geq2$ is a special Lipschitz domain i.e.
\begin{align*}
 \Omega=\{(x',x_n)\in\Rn: x_n>\omega(x')\}\quad\text{ where}\\
 |\omega(x')-\omega(y')|\leq A|x'-y'|\quad\text{for all $x',y'\in\R^{n-1}$.}
\end{align*}
By $K$ we denote the cone adapted to the special Lipschitz domain with
\begin{align}
 \label{cone}
 K=\{(x',x_n)\in\Rn:|x'|<A^{-1}x_n\}.
\end{align}
This cone has the property that $x+K\in\Omega$ for all $x\in\Omega$ and we denote by $-K=\{-x:x\in K\}$ the reflected cone. The crucial property is now that for all $\gamma\in\mathcal{D}(-K)$ and all $f\in\DSo$ the convolution $(\gamma\ast f)(x)=\skalpro{\gamma(x-\cdot)}{f}$ is well defined in $\Omega$, since $\supp\gamma(x-\cdot)\subset\Omega$ for all $x\in\Omega$.\\
Before coming to the intrinsic characterization and the extension operator we state two useful results which are needed later on. First we need a version of Calderon reproducing formula which was proved in \cite[Proposition 2.1]{RychExt}.
\begin{lem}\label{lem:Calderon}
 Let $\Phi_0\in\mathcal{D}(-K)$ with $\int_\Rn\Phi_0(x)dx\neq0$ be given. Further assume that $\Phi(x)=\Phi_0(x)-2^{-n}\Phi(x/2)$ fulfills
 \begin{align}
  \int_{\Rn}x^\beta\Phi(x)dx=0\qquad\text{for }|\beta|<L_\Phi.
 \end{align}
 Then for any given $L_\Psi\in\R$ there exist functions $\Psi_0,\Psi\in\mathcal{D}(-K)$ with
 \begin{align}
  \int_{\Rn}x^\beta\Psi(x)dx&=0\qquad\text{for }|\beta|<L_\Psi
  \intertext{and for all $f\in\DSo$ we have the identity}
  f&=\sum_{j=0}^\infty\Psi_j\ast\Phi_j\ast f\quad\text{in }\DSo.
 \end{align}
\end{lem}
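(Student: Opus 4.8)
The plan is to follow Rychkov's original argument and adapt it to the cone-supported setting, keeping in mind that the proof is essentially a Fourier-analytic identity, so the variable exponents play no role here. First I would pass to the Fourier side: writing $\widehat{\Phi_0}$ for the Fourier transform, the decay condition $\int\Phi_0\neq 0$ means $\widehat{\Phi_0}(0)\neq 0$, hence $\widehat{\Phi_0}$ is nonzero on a ball $\{|\xi|<\varepsilon\}$; and since $\Phi(x)=\Phi_0(x)-2^{-n}\Phi(x/2)$, iterating gives $\Phi_0(x)=\sum_{k\ge 0}2^{-kn}\Phi(2^{-k}x)$ in a suitable sense, equivalently $\widehat{\Phi_0}(\xi)=\sum_{k\ge0}\widehat{\Phi}(2^{-k}\xi)$, so on dyadic annuli $\widehat{\Phi}$ is controlled and one has the partition-type identity $\sum_{j\ge 0}\widehat{\Phi}(2^{-j}\xi)\cdot(\text{correction near }0)=1$. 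The moment condition on $\Phi$ translates to $D^\beta\widehat{\Phi}(0)=0$ for $|\beta|<L_\Phi$, which is what makes $\widehat{\Phi}$ vanish to high order at the origin.

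Next I would construct $\Psi_0,\Psi$ by an explicit division procedure. The goal is an identity $\widehat{\Psi_0}(\xi)\widehat{\Phi_0}(\xi)+\sum_{j\ge 1}\widehat{\Psi}(2^{-j}\xi)\widehat{\Phi}(2^{-j}\xi)=1$ for all $\xi$, which on the distribution side is exactly $f=\sum_{j\ge 0}\Psi_j\ast\Phi_j\ast f$. Following Rychkov, one first fixes a smooth function equal to $1$ near the support structure, then solves for $\widehat{\Psi_0}$ and $\widehat{\Psi}$ by dividing by $\widehat{\Phi_0}$ resp. $\widehat{\Phi}$ on the regions where these are nonzero (using \eqref{_LM_Tauber1,5}–type positivity), and patches with a dyadic resolution of unity in frequency. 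The prescribed number of vanishing moments $L_\Psi$ for $\Psi$ is arranged by multiplying the constructed symbol by a factor vanishing to order $L_\Psi$ at $\xi=0$ and compensating elsewhere — this is possible because near $0$ one has freedom, $\widehat{\Phi_0}$ being nonzero there. The key structural point is that $\Psi_0,\Psi$ come out as Schwartz functions whose Fourier transforms are compactly supported away from $0$ (for $\Psi$), hence $\Psi_0,\Psi$ are entire of exponential type — but we need them compactly supported in the \emph{spatial} cone $-K$, which Fourier-support compactness does not give.

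This last point is the main obstacle, and Rychkov's device for it is what must be carried over: instead of prescribing compact Fourier support, one keeps $\widehat{\Psi_0},\widehat{\Psi}$ merely $C^\infty$ and chooses them so that $\Psi_0,\Psi$ are genuinely compactly supported in $-K$. This is achieved by choosing the building blocks adapted to the cone from the start: one takes $\Phi_0\in\mathcal{D}(-K)$ (given), notes that the convolution powers and the correction terms in the telescoping $\Phi(x)=\Phi_0(x)-2^{-n}\Phi(x/2)$ stay supported in $-K$ because $-K$ is a convex cone stable under the dilation $x\mapsto x/2$, and then builds $\Psi_0,\Psi$ as finite linear combinations of dilates and convolutions of $\Phi_0$ — so that support in $-K$ is automatic — with coefficients chosen (via the division argument above, now performed at the level of a geometric-series inversion of the operator $g\mapsto g-2^{-n}g(\cdot/2)$ convolved appropriately) to achieve the reproducing identity and the moment conditions. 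Concretely one sets, as in \cite[Prop.~2.1]{RychExt}, $\Psi_0=\sum_{\text{finite}}c_m\,\Phi_0^{\ast(\cdot)}$-type expressions; I would reproduce that construction verbatim since nothing in it sees the exponents $p,q$. Then I would verify: (a) $\Psi_0,\Psi\in\mathcal{D}(-K)$, (b) $\int x^\beta\Psi(x)\,dx=0$ for $|\beta|<L_\Psi$, by the vanishing at $0$ of the constructed symbol, and (c) the identity $f=\sum_j\Psi_j\ast\Phi_j\ast f$ in $\DSo$, where convergence and well-definedness of each term use precisely the remark in the text that $\gamma\in\mathcal{D}(-K)$, $f\in\DSo$ makes $\gamma\ast f$ well defined on $\Omega$, together with the Paley–Wiener/symbol identity tested against $\varphi\in\Do$. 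The convergence of the series in $\DSo$ reduces, after pairing with a test function, to the frequency-side identity being a finite partition near any fixed frequency scale, hence only finitely many terms are non-negligible on the relevant support — the standard telescoping estimate.
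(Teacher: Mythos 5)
Your strategy is the same as the paper's, which in fact offers no proof at all for this lemma but simply invokes \cite[Proposition 2.1]{RychExt}; and you correctly isolate the two points that matter, namely that the variable exponents play no role and that the real difficulty is forcing $\supp\Psi_0,\supp\Psi\subset -K$, which kills any construction prescribing compact \emph{Fourier} support and forces one to build $\Psi_0,\Psi$ inside the convolution algebra generated by dilates of $\Phi_0$, support in $-K$ then being automatic because $-K$ is a convex cone. So the approach is right. But several concrete claims you make along the way are false, and since you propose to ``reproduce the construction verbatim'' you should know exactly what it is. First, with $\Phi=\Phi_0-2^{-n}\Phi_0(\cdot/2)$ (the recursion in the statement is a typo for this) the telescoping identity is $\widehat{\Phi_0}(\xi)+\sum_{k\geq1}\widehat{\Phi}(2^{-k}\xi)=\widehat{\Phi_0}(0)$, not $\widehat{\Phi_0}(\xi)=\sum_{k\geq0}\widehat{\Phi}(2^{-k}\xi)$; the latter already fails at $\xi=0$. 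Second, the object one inverts is not the operator $g\mapsto g-2^{-n}g(\cdot/2)$ but convolution with $\Phi_0$ itself, via a \emph{finite} Neumann series: normalizing $\int\Phi_0=1$ and setting $g_N(\xi)=(1-\widehat{\Phi_0}(\xi)^2)^N$, one has
\begin{align*}
1-g_N(\xi)=\widehat{\Phi_0}(\xi)\,\widehat{\Psi_0}(\xi),\qquad \Psi_0=\Phi_0\ast\sum_{k=0}^{N-1}(\delta-\Phi_0\ast\Phi_0)^{\ast k}\in\mathcal{D}(-K),
\end{align*}
and then one telescopes $1=\bigl(1-g_N(\xi)\bigr)+\sum_{j\geq1}\bigl(g_N(2^{-j+1}\xi)-g_N(2^{-j}\xi)\bigr)$, factoring each difference by $a^N-b^N=(a-b)\sum_{k}a^k b^{N-1-k}$ so that $g_N(2\eta)-g_N(\eta)=\widehat{\Phi}(\eta)\widehat{\Psi}(\eta)$ with $\widehat{\Psi}$ vanishing to order $N-1$ at the origin; taking $N>L_\Psi+1$ yields the moment condition, and $\Psi$ is again a finite combination of convolution products of dilates of $\Phi_0$, hence in $\mathcal{D}(-K)$. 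There is no division by $\widehat{\Phi_0}$ or $\widehat{\Phi}$ and no patching with a frequency partition of unity anywhere in the final construction.

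Third, your closing argument for convergence of $\sum_j\Psi_j\ast\Phi_j\ast f$ in $\DSo$ is wrong in its stated form: $\widehat{\Psi_j}$ and $\widehat{\Phi_j}$ are \emph{not} compactly supported (their inverse transforms are), so ``only finitely many terms are non-negligible at a fixed frequency scale'' is not available. The correct argument is spatial: pair with $\gamma\in\Do$, use $\supp(\Psi_j\ast\Phi_j)\subset 2^{-j}(\supp\Psi+\supp\Phi)\subset -K$ to see that $(\Psi_j\ast\Phi_j)(-\cdot)\ast\gamma$ is supported in a fixed compact subset of $\Omega$ uniformly in $j$ (this is where the cone geometry enters), and use that the partial sums of the symbols equal $1-g_N(2^{-J}\xi)$ exactly, so that $\sum_{j\leq J}(\Psi_j\ast\Phi_j)(-\cdot)\ast\gamma\to\gamma$ in $\Do$ as $J\to\infty$. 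With these repairs your outline becomes Rychkov's proof, which is exactly what the paper relies on.
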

The second lemma is a Hardy type inequality for the mixed variable spaces. Its proof can be found in \cite[Lemma 9]{KemV12}.
\begin{lem}\label{lem:Hardy}
 Let $p,q\in\mathcal{P}(\Rn)$ and $\delta>0$. For a sequence $(h_j)_{j\in\N_0}$ of measureable functions we denote
 \begin{align*}
  H_l(x)=\sum_{j=0}^\infty2^{-|j-l|\delta}h_j(x).
 \end{align*}
 Then there exist constants $C_1,C_2>0$ depending on $\p,\q$ and $\delta$ with
 \begin{align*}
  \norm{H_l}{\ell_\q(L_\p)}&\leq C_1 \norm{h_l}{\ell_\q(L_\p)}\\
  \norm{H_l}{L_\p(\ell_\q)}&\leq C_2 \norm{h_l}{L_\p(\ell_\q)}.
 \end{align*}
\end{lem}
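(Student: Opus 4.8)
The plan is to reduce both inequalities to a single mechanism: a discrete convolution estimate carried out in the $s$-metric of the ambient mixed quasi-norm, where $s:=\min(1,p^-,q^-)$. This is a variable-exponent avatar of Young's inequality $\ell^1\ast\ell^r\hookrightarrow\ell^r$. Since $|H_l|\le\sum_{j}2^{-|j-l|\delta}|h_j|$ and the quasi-norms $\norm{\cdot}{\ell_\q(L_\p)}$ and $\norm{\cdot}{L_\p(\ell_\q)}$ are monotone in the moduli of the entries, I may assume $h_j\ge0$; I also extend the sequence by $h_j:=0$ for $j<0$. The structural observation is then that, componentwise,
\begin{align*}
(H_l)_{l\in\N_0}=\sum_{m\in\Z}2^{-|m|\delta}\,\tau_m h,\qquad\text{where }(\tau_m h)_l:=h_{l-m},\ \ h=(h_j)_{j\in\N_0},
\end{align*}
i.e. $(H_l)_l$ is a geometric superposition of shifts of $h$ in the sequence index.

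I would then invoke two facts. First, both $\ell_\q(L_\p)$ and $L_\p(\ell_\q)$ are $s$-normed: writing $X$ for either of them, $\norm{f+g}{X}^s\le\norm{f}{X}^s+\norm{g}{X}^s$. Because $p,q\in\mathcal{P}(\Rn)$ we have $p^-,q^->0$, so $s>0$, and this $s$-subadditivity is inherited from the $\min(1,\p)$- and $\min(1,\q)$-subadditivity of $L_\p$ and of $\ell_\q$ (see \cite{KemV12}, \cite{DHHR}). Second, the shifts are contractions, $\norm{\tau_m h}{X}\le\norm{h}{X}$ for every $m\in\Z$: since $\q$ enters only through the space variable and not the sequence index, $\tau_m$ merely reindexes the summation over $\N_0$ and, for $m<0$, discards the finitely many entries of negative index, so the relevant modular never increases. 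Combining the two,
\begin{align*}
\norm{(H_l)_l}{X}^s\le\sum_{m\in\Z}2^{-|m|\delta s}\,\norm{\tau_m h}{X}^s\le\Big(\sum_{m\in\Z}2^{-|m|\delta s}\Big)\norm{h}{X}^s,
\end{align*}
and the last series is finite since $\delta s>0$; taking $X=\ell_\q(L_\p)$ yields the first inequality and $X=L_\p(\ell_\q)$ the second, with constants depending only on $\p,\q$ (through $s$) and $\delta$. The convergence of the series $\sum_m 2^{-|m|\delta}\tau_m h$ in $X$, and its coincidence with $(H_l)_l$, is routine (monotone convergence in $x$, $s$-completeness of $X$).

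The step that needs genuine care — and the only place where $p^-,q^->0$ is actually used — is the $s$-triangle inequality for the mixed quasi-norms, especially for $\ell_\q(L_\p)$, whose modular \eqref{modular_mixed} is an infimum over scaling parameters $\lambda_\nu$, so one has to check that this infimum is compatible with sums. If one wishes to avoid quoting that, an equivalent route works directly on the modular: normalise $\norm{h}{X}=1$, split $\delta=\delta_1+\delta_2$ with $\delta_i>0$, use a discrete H\"older inequality in $j$ to absorb $2^{-|j-l|\delta_1}$ into a constant, and estimate the modular of $\big(\sum_j2^{-|j-l|\delta_2}h_j\big)_l$ against $\big(\sum_m2^{-|m|\delta_2 s}\big)$ times the modular of $h$; summing the geometric series then produces $C_1,C_2$. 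I expect the shift-plus-$s$-subadditivity version to be the cleanest to write up in full.
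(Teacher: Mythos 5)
First, a point of reference: the paper itself gives no proof of this lemma, it is quoted verbatim from \cite[Lemma 9]{KemV12}, so your proposal has to be measured against that proof. Your treatment of the second inequality is fine: $L_\p(\ell_\q)$ really is an iterated space (first the pointwise $\ell_{q(x)}$ quasi-norm, then $L_\p$ in $x$), so the $s$-triangle inequality with $s=\min(1,p^-,q^-)$ does follow by the inheritance you describe, and together with the shift decomposition and the contractivity of the shifts this closes the $L_\p(\ell_\q)$ case.

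The gap is in the first inequality, and it sits exactly where you flag it. The space $\ell_\q(L_\p)$ is \emph{not} an iteration of $\ell_\q$ over $L_\p$-norms: in the modular \eqref{modular_mixed} the scaling parameter $\lambda_\nu$ is raised to the power $1/q(x)$, which depends on the space variable, so the $\nu$-th summand is not a function of $\norm{f_\nu}{L_\p(\Rn)}$ alone unless $q$ is constant. Hence the $s$-subadditivity of $\norm{\cdot}{\ell_\q(L_\p)}$ cannot be ``inherited from the $\min(1,\p)$- and $\min(1,\q)$-subadditivity of $L_\p$ and $\ell_\q$''; for non-constant $q$ (in particular with $q^+=\infty$ allowed, as it is here since only $p,q\in\mathcal{P}(\Rn)$ is assumed) even the plain quasi-triangle inequality for this functional is a known delicate point, not a formality. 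Since the whole content of the first inequality is concentrated in this one step, your primary route is not a proof. Your fallback is the right mechanism and is essentially what \cite[Lemma 9]{KemV12} does: normalize $\norm{h_l}{\ell_\q(L_\p)}\leq1$, take the weights $\lambda_j$ realizing the inner infima, redistribute them as $\mu_l=c\sum_j2^{-|j-l|\varepsilon}\lambda_j$ for a small $\varepsilon>0$, use $\lambda_j/\mu_l\leq c^{-1}2^{|j-l|\varepsilon}$ to reduce $H_l/\mu_l^{1/\q}$ to an absolutely convergent combination of the normalized $h_j/\lambda_j^{1/\q}$, and conclude with the unit-ball property of $\varrho_\p$. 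That argument, written out, is what must replace the $s$-triangle-inequality step; as submitted, the proposal proves only the $L_\p(\ell_\q)$ half.
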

Now we are ready to formulate our first main theorem about a linear extension operator.
\begin{Thm}\label{Thm:ext}
Let $p,q\in\Plog$ (with $p^+,q^+<\infty$ in the F-case) and $(w_j)\in\mgk$. Further, let $\Phi_0\in\mathcal{D}(-K)$ with $\int\Phi_0(x)dx\neq0$ be given. The function $\Phi(x)=\Phi_0(x)-2^{-n}\Phi(x/2)$ should satisfy $L_\Phi>\alpha_2$.\\
Construct $\Psi_0,\Psi\in\mathcal{D}(-K)$ with $L_\Psi>\frac{n+c_{\log}(1/q)}{\min(p^-,q^-)}+\alpha-\alpha_1$ as in Lemma \ref{lem:Calderon} with
\begin{align*}
f&=\sum_{j=0}^\infty\Psi_j\ast\Phi_j\ast f\quad\text{in }\DSo.
\end{align*}
For any $g:\Omega\to\R$ denote by $g_\Omega$ its extension from $\Omega$ to $\Rn$ by zero. Then the map $\mathcal{E}:\DSo\to\SSn$ with
\begin{align}\label{ExtensionOP}
 f\mapsto\sum_{j=0}^\infty\Psi_j\ast(\Phi_j\ast f)_\Omega
\end{align}
is a linear and bounded extension operator from $\Ao$ to $\Awpxqx$.
\end{Thm}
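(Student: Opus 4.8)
The plan is to verify three things in turn: that the series in \eqref{ExtensionOP} converges in $\SSn$ (so that $\mathcal{E}f$ is a well-defined tempered distribution), that $(\mathcal{E}f)|_\Omega=f$, and that $\norm{\mathcal{E}f}{\Awpxqx}\leq C\,\norm{f}{\Ao}$. The first two are soft and rest only on the support conditions $\supp\Phi_j,\,\supp\Psi_j\subset-K$: for $x\in\Omega$ the test functions $\Phi_j(x-\cdot)$ and $\Psi_j(x-\cdot)$ are supported in $x+K\subset\Omega$, so $\Phi_j\ast f$ is a well-defined smooth function on $\Omega$, and for $x\in\Omega$ the convolution $\bigl(\Psi_j\ast(\Phi_j\ast f)_\Omega\bigr)(x)$ only sees the genuine values of $\Phi_j\ast f$ inside $\Omega$, i.e.\ it equals $(\Psi_j\ast\Phi_j\ast f)(x)$; summing over $j$ and invoking the reproducing formula of Lemma \ref{lem:Calderon} yields $(\mathcal{E}f)|_\Omega=f$. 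I would carry out all analytic estimates first for the partial sums $\mathcal{E}_Nf=\sum_{j=0}^N\Psi_j\ast(\Phi_j\ast f)_\Omega$; the convergence of \eqref{ExtensionOP} in $\SSn$ then follows in a routine way from $g\in\Awpxqx$ (see below), the moment conditions on $\Psi$, and lower semicontinuity of the quasi-norm under $\SSn$-convergence.

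For the norm bound I would fix an arbitrary $g\in\Awpxqx$ with $g|_\Omega=f$ and establish $\norm{\mathcal{E}f}{\Awpxqx}\leq C\,\norm{g}{\Awpxqx}$ with $C$ independent of $g$; passing to the infimum over such $g$ then proves the theorem. The same support argument gives $\Phi_j\ast f=(\Phi_j\ast g)|_\Omega$ on $\Omega$, so $(\Phi_j\ast f)_\Omega=(\Phi_j\ast g)\,\chi_\Omega$ and
\begin{align*}
\mathcal{E}f=\sum_{j=0}^\infty\Psi_j\ast\bigl((\Phi_j\ast g)\,\chi_\Omega\bigr).
\end{align*}
I would then apply the local means characterization (Proposition \ref{Prop:LM}) to $\mathcal{E}f$ with the system $(\Phi_k)_{k\in\N_0}$ itself, which is admissible by Remark \ref{rem:phi0zuLM}(ii) with $L_\Phi>\alpha_2$ in the role of $R$; for the Peetre parameter I would pick $a$ just above $\tfrac{n+c_{\log}(1/q)}{p^-}+\alpha$ in the $B$-case (resp.\ $\tfrac{n}{\min(p^-,q^-)}+\alpha$ in the $F$-case), but still with $a<L_\Psi+\alpha_1$ — possible because by hypothesis $L_\Psi+\alpha_1>\tfrac{n+c_{\log}(1/q)}{\min(p^-,q^-)}+\alpha$ exceeds both of these thresholds. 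It remains to estimate $w_k(x)\,|(\Phi_k\ast\mathcal{E}f)(x)|$, where $(\Phi_k\ast\mathcal{E}f)(x)=\sum_{j\geq0}\bigl((\Phi_k\ast\Psi_j)\ast((\Phi_j\ast g)\,\chi_\Omega)\bigr)(x)$.

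The heart of the matter is the almost orthogonality of the kernels $\Phi_k\ast\Psi_j$. As $\Phi_k,\Psi_j\in\mathcal{D}(-K)$ are supported in balls of radii $\sim2^{-k}$, $\sim2^{-j}$, expanding the factor of coarser scale by Taylor's formula against the vanishing moments of the finer one (the $L_\Psi$ moments of $\Psi$ when $j\geq k$, the $L_\Phi$ moments of $\Phi$ when $j<k$; the finitely many terms with $j=0$ or $k=0$ treated directly) should give
\begin{align*}
\bigl|(\Phi_k\ast\Psi_j)(z)\bigr|\leq C\,2^{\min(j,k)n}\,2^{-|j-k|N_{j,k}}\,\chi_{\{|z|\leq c\,2^{-\min(j,k)}\}}(z),\qquad N_{j,k}=\begin{cases}L_\Psi,&j\geq k,\\ L_\Phi,&j<k.\end{cases}
\end{align*}
Dominating the inner convolution pointwise by the Peetre maximal function \eqref{_PO_modifiziert} for $(\Phi_k)$ — on the relevant ball, $|(\Phi_j\ast g)(y)|\leq(\Phi_j^*g)_a(x)\,(1+2^j|x-y|)^a\leq(\Phi_j^*g)_a(x)\,2^{(j-k)_+a}$ — and using $|\chi_\Omega|\leq1$, one gets $|(\Phi_k\ast\mathcal{E}f)(x)|\leq C\sum_j 2^{-|j-k|N_{j,k}+(j-k)_+a}\,(\Phi_j^*g)_a(x)$. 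Multiplying by $w_k(x)$ and inserting $w_k(x)\leq C\,w_j(x)\,2^{(k-j)_+\alpha_2+(j-k)_+(-\alpha_1)}$ from property (ii) of $\mgk$, the surviving exponent equals $(j-k)(a-\alpha_1-L_\Psi)$ for $j\geq k$ and $(k-j)(\alpha_2-L_\Phi)$ for $j<k$, both negative by the choice of $a$ and by $L_\Phi>\alpha_2$; hence
\begin{align*}
w_k(x)\,|(\Phi_k\ast\mathcal{E}f)(x)|\leq C\sum_{j=0}^\infty 2^{-|j-k|\delta}\,w_j(x)\,(\Phi_j^*g)_a(x)
\end{align*}
for some $\delta>0$. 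Taking $\ell_\q(L_\p)$- (resp.\ $L_\p(\ell_q)$-) quasi-norms, Lemma \ref{lem:Hardy} absorbs the discrete convolution in $k$ and Proposition \ref{Prop:LM} identifies the resulting expression with $\norm{g}{\Awpxqx}$, so $\norm{\mathcal{E}f}{\Awpxqx}\leq C\,\norm{g}{\Awpxqx}$.

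The step I expect to be the main obstacle is this almost orthogonality estimate together with the scale bookkeeping around it: the decay coming from the moments of $\Phi$ and $\Psi$ must be played off against the loss $2^{(j-k)_+a}$ produced by the Peetre maximal function and the growth $2^{(k-j)_+\alpha_2+(j-k)_+(-\alpha_1)}$ of the weight, so that the residual exponent is negative \emph{exactly} under the hypotheses $L_\Phi>\alpha_2$ and $L_\Psi>\tfrac{n+c_{\log}(1/q)}{\min(p^-,q^-)}+\alpha-\alpha_1$ — this is precisely where the admissible range of $a$ in Proposition \ref{Prop:LM} has to be weighed against $L_\Psi$, and it accounts for the shape of the lower bound imposed on $L_\Psi$. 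The remaining points are comparatively routine: the cutoff $\chi_\Omega$ enters only through $|\chi_\Omega|\leq1$ and is immaterial for boundedness (it is relevant solely for the identity $(\mathcal{E}f)|_\Omega=f$), the terms with $j=0$ or $k=0$ form a harmless finite remainder, and passing from $\mathcal{E}_Nf$ to $\mathcal{E}f$ is standard.
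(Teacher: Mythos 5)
Your proposal is correct, and its analytic core (almost orthogonality of $\Phi_k\ast\Psi_j$ via Taylor expansion against moment conditions, the weight inequalities from $\mgk$, the discrete Hardy inequality of Lemma \ref{lem:Hardy}, and the local means characterization of Proposition \ref{Prop:LM}) coincides with the paper's. The genuine difference is the route to the norm bound. The paper proves Theorems \ref{Thm:ext} and \ref{Thm:Intrinsic} simultaneously: it sets up an abstract estimate $\norm{\sum_j\Psi_j\ast g^j}{\Awpxqx}\leq c\,\|(g^j)\|_X$, where the $X$-norm involves the Peetre-type majorant $G^j(x)=\sup_{y\in\Rn}|g^j(y)|(1+2^j|x-y|)^{-a}$ over all of $\Rn$, and then, taking $g^j=(\Phi_j\ast f)_\Omega$, it must control $G^j(x)$ for $x\notin\bar\Omega$ by the \emph{intrinsic} maximal function $(\Phi_j^*f)^\Omega_a$ evaluated at the reflected point $\widetilde{x}=(x',2\omega(x')-x_n)\in\Omega$, using $|\widetilde{x}-y|\leq B|x-y|$ for $y\in\Omega$. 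That reflection step is exactly what makes the bound depend only on data living on $\Omega$ and hence yields the intrinsic characterization. You instead fix an arbitrary extension $g\in\Awpxqx$ of $f$, observe $(\Phi_j\ast f)_\Omega=(\Phi_j\ast g)\chi_\Omega$, and dominate everything by the \emph{global} Peetre maximal function $(\Phi_j^*g)_a$ on $\Rn$, finishing by Proposition \ref{Prop:LM} applied to $g$ and taking the infimum over $g$. This is legitimate and strictly simpler for Theorem \ref{Thm:ext} alone --- no geometry of $\partial\Omega$ beyond the cone property enters the norm estimate --- but it buys you only the extension theorem, not Theorem \ref{Thm:Intrinsic}, whereas the paper's extra work with the reflection gives both at once. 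Your parameter bookkeeping is consistent: choosing $a$ above the threshold of Proposition \ref{Prop:LM} but below $L_\Psi+\alpha_1$ is possible precisely because of the hypothesis on $L_\Psi$, and the exponents $(j-k)(a-\alpha_1-L_\Psi)$ and $(k-j)(\alpha_2-L_\Phi)$ are negative exactly as you say. The only place you are thinner than the paper is the convergence of \eqref{ExtensionOP} in $\SSn$: the paper makes this precise by estimating each term $\Psi_j\ast g^j$ in an auxiliary space ${F^{\widetilde{\vek{w}}}}_{\!\!\!\!\!\!\!p(\cdot),q(\cdot)}(\R^n)$ with dampened weights $\widetilde{w}_l=2^{-2l\delta}w_l$, obtaining geometric decay $c\,2^{-j\delta}$ of the summands and invoking ${F^{\widetilde{\vek{w}}}}_{\!\!\!\!\!\!\!p(\cdot),q(\cdot)}(\R^n)\subset\SSn$; your pointwise estimates already contain everything needed for this, so the gap is one of exposition rather than substance.
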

In more detail, the theorem claims that the series \eqref{ExtensionOP} converges in $\SSn$ for any $f\in\Ao$ to an $\mathcal{E}f$ with: \begin{itemize}
                                                                                                                                        \item $\mathcal{E}f|_\Omega=f$ in the sense of $\DSo$;
                                                                                                                                        \item $\norm{\mathcal{E}f}{\Awpxqx}\leq c\norm{f}{\Ao}$ for any $f\in\Ao$.
                                                                                                                                       \end{itemize}
                                                                                                                                       The theorem above is directly connected to the question of an intrinsic characterization of the spaces $\Ao$, wich will be solved in the next theorem.
                                                                                                                                       \begin{Thm}\label{Thm:Intrinsic}
   Let $\Phi_0\in\mathcal{D}(-K)$ be given with $\int\Phi_0(x)dx\neq0$ and $L_\Phi>\alpha_2$. Further, let $p,q\in\Plog$ and $(w_j)\in\mgk$. For every $f\in\DSo$ we define for $k\in\N_0$                                                                                                                                     
   \begin{align*}
    (\Phi_k^*f)^\Omega_a(x):=\sup_{y\in\Omega}\frac{|(\Phi_k\ast f)(y)|}{(1+|2^k(y-x)|)^{a}},\quad x\in\Omega.
   \end{align*}
\begin{enumerate}
 \item For $a>\frac{n+c_{\log}(1/q)}{p^-}+\alpha$ and any $f\in\DSo$
 \begin{align*}
  \norm{f}{\Bo}\approx\norm{\left(w_k(\Phi_k^*f)^\Omega_a(\cdot)\right)_{k\in\N_0}}{\ell_\q(L_\p(\Omega))}
 \end{align*}
 \item For $a>\frac{n}{\min(p^-,q^-)}+\alpha$, $p^+,q^+<\infty$ and any $f\in\DSo$
 \begin{align}\notag
  \norm{f}{\Fo}&\approx\norm{\left(w_k(\Phi_k^*f)^\Omega_a(\cdot)\right)_{k\in\N_0}}{L_\p(\ell_\q(\Omega))}\\
  &=\norm{\left(\sum_{k=0}^\infty|w_k(\cdot)(\Phi_k^*f)^\Omega_a(\cdot)|^\q\right)^{1/\q}}{L_\p(\Omega)}.\label{fnorm}
 \end{align}
\end{enumerate}															\end{Thm}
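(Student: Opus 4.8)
The plan is to derive Theorem~\ref{Thm:Intrinsic} essentially as a corollary of Theorem~\ref{Thm:ext} together with the local means characterization on $\Rn$ (Proposition~\ref{Prop:LM} in the form of Remark~\ref{rem:phi0zuLM}, using $\Phi_0$ and $\Phi_j(x)=2^{jn}\Phi(2^jx)$ as the basic functions). The one inequality that requires almost no work is ``$\gtrsim$'': if $g\in\Awpxqx$ is any extension of $f$, then for $y\in\Omega$ one has $(\Phi_k\ast f)(y)=(\Phi_k\ast g)(y)$ because $\supp\Phi_k(y-\cdot)\subset y-K\subset\Omega$ (using $\Phi_0\in\mathcal{D}(-K)$ and the fact that $\Phi$, hence every $\Phi_k$, is supported in $-K$). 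Consequently the restricted Peetre maximal function satisfies $(\Phi_k^*f)^\Omega_a(x)\leq(\Phi_k^*g)_a(x)$ for all $x\in\Omega$, so that the right-hand side of the claimed equivalence is bounded, after restricting the $\ell_\q(L_\p)$- resp.\ $L_\p(\ell_\q)$-norm to $\Omega$, by $\norm{(w_k(\Phi_k^*g)_a)_k}{\ell_\q(L_\p(\Rn))}\approx\norm{g}{\Bwpxqx}$ (resp.\ the $F$-analogue). Taking the infimum over all such $g$ gives the right-to-left estimate.

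For the reverse inequality ``$\lesssim$'', the idea is to use the extension operator $\mathcal{E}$ from Theorem~\ref{Thm:ext} to produce a concrete extension and then control $\norm{\mathcal{E}f}{\Awpxqx}$ by the intrinsic quantity. First I would note that $\mathcal{E}f|_\Omega=f$, so $\norm{f}{\Ao}\leq\norm{\mathcal{E}f}{\Awpxqx}$; the point is therefore to bound $\norm{\mathcal{E}f}{\Awpxqx}$ by $\norm{(w_k(\Phi_k^*f)^\Omega_a)_k}{\ell_\q(L_\p(\Omega))}$ (resp.\ the $F$-norm). I would re-run the key estimate from the proof of Theorem~\ref{Thm:ext}: apply the local means characterization on $\Rn$ to $\mathcal{E}f=\sum_j\Psi_j\ast(\Phi_j\ast f)_\Omega$ with the system $(\Phi_k)$ (which satisfies the Tauberian and moment conditions by Remark~\ref{rem:phi0zuLM} and $L_\Phi>\alpha_2$), so that $\norm{\mathcal{E}f}{\Awpxqx}\approx\norm{(w_k(\Phi_k\ast\mathcal{E}f))_k}{\cdots}$. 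Expanding $\Phi_k\ast\mathcal{E}f=\sum_j\Phi_k\ast\Psi_j\ast(\Phi_j\ast f)_\Omega$ and estimating $\Phi_k\ast\Psi_j$ by the standard convolution lemma for functions with moment conditions (gaining a factor $2^{-|k-j|L}$ for suitable $L$, which is where $L_\Phi>\alpha_2$ and the lower bound on $L_\Psi$ enter), one produces a pointwise bound of $|w_k(\Phi_k\ast\mathcal{E}f)(x)|$ by $\sum_j 2^{-|k-j|\delta}$ times a shifted average of $|(\Phi_j\ast f)_\Omega|$ weighted by $w_j$; the weight transfer $w_k\lesssim w_j(1+2^{\min(j,k)}|\cdot|)^\alpha 2^{|k-j|\max(|\alpha_1|,|\alpha_2|)}$ from the class $\mgk$ handles the weights. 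Crucially, since $(\Phi_j\ast f)_\Omega$ is the zero-extension, that shifted average is controlled by the restricted Peetre maximal function $(\Phi_j^*f)^\Omega_a$ evaluated at the appropriate point in $\Omega$ — this is exactly the mechanism that keeps the estimate ``intrinsic''. Then the Hardy-type inequality (Lemma~\ref{lem:Hardy}) absorbs the $2^{-|k-j|\delta}$-convolution in $\ell_\q(L_\p)$ resp.\ $L_\p(\ell_\q)$, and one concludes $\norm{\mathcal{E}f}{\Awpxqx}\lesssim\norm{(w_j(\Phi_j^*f)^\Omega_a)_j}{\ell_\q(L_\p(\Omega))}$.

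There is a technical point to be handled carefully before Lemma~\ref{lem:Hardy} can be applied: the Hardy inequality is stated for sequences of measurable functions on $\Rn$, whereas $(\Phi_j^*f)^\Omega_a$ lives on $\Omega$. I would deal with this either by observing that the pointwise estimate for $|w_k(\Phi_k\ast\mathcal{E}f)(x)|$ obtained above is valid for all $x\in\Rn$ with the right-hand side involving $(\Phi_j^*f)^\Omega_a$ extended by zero outside $\Omega$ (legitimate because the convolutions $\Phi_k\ast\Psi_j\ast(\Phi_j\ast f)_\Omega$ are genuinely defined on all of $\Rn$), or by invoking the extension of exponents from $\mathcal{P}(\Omega)$ to $\mathcal{P}(\Rn)$ noted in the remark after the definition of $\Plog$. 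The main obstacle, and the step I expect to need the most care, is precisely the passage from the shifted average of the zero-extended $\Phi_j\ast f$ to the restricted Peetre maximal operator $(\Phi_j^*f)^\Omega_a$: one must verify that for $x\in\Rn$ (or $x\in\Omega$) the relevant average of $|(\Phi_j\ast f)_\Omega(z)|(1+2^j|z-x|)^{-a}$ over $z$ is bounded by $c\,(\Phi_j^*f)^\Omega_a(x^\Omega)$ for a suitable nearby point $x^\Omega\in\Omega$, using that the integration effectively only sees $z\in\Omega$ and combining this with the geometry of the cone $K$. This is the variable-exponent analogue of the corresponding step in Rychkov's argument and, as there, is the heart of the matter; once it is in place, the Besov and Triebel–Lizorkin cases run in parallel, differing only in the order of the $\ell_\q$ and $L_\p$ (quasi-)norms and in the exact threshold for $a$, which is dictated by Proposition~\ref{Prop:LM}.
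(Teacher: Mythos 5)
Your proposal is correct and follows essentially the same route as the paper: the easy direction via $(\Phi_k^*f)^\Omega_a\leq(\Phi_k^*g)_a$ for any extension $g$, and the hard direction by bounding $\norm{\mathcal{E}f}{\Awpxqx}$ through the convolution estimate for $\Phi_l\ast\Psi_j$ with moment conditions, the weight transfer for $\vek{w}\in\mgk$, Lemma~\ref{lem:Hardy}, and the geometric reflection $x\mapsto\widetilde{x}=(x',2\omega(x')-x_n)$ that reduces the Peetre-type maximal function of the zero-extension $(\Phi_j\ast f)_\Omega$ on $\Rn$ to $(\Phi_j^*f)^\Omega_a$ on $\Omega$. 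The only cosmetic difference is that the paper packages the key estimate as a boundedness statement for an abstract sequence space $X$ before specializing to $g^j=(\Phi_j\ast f)_\Omega$, which is exactly the computation you describe.
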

\begin{proof}
 The Theorems \ref{Thm:ext} and \ref{Thm:Intrinsic} are so closely connected that they will both be proved in one proof. As usual we restrict to the F-case and outline the necessary modifications for the B-case. By Remark \ref{rem:phi0zuLM} we have the local means characterization from Proposition \ref{Prop:LM} with the functions $\Phi_0$ and $\Phi_j$ constructed from $\Phi_0$.\\
 \underline{First step:} We show $\norm{f}{\Fo}\geq c \norm{\left(\sum_{k=0}^\infty|w_k(\cdot)(\Phi_k^*f)^\Omega_a(\cdot)|^\q\right)^{1/\q}}{L_\p(\Omega)}$. This is an easy consequence of the characterization from Proposition \ref{Prop:LM} using
 \begin{align*}
  (\Phi_k^*f)^\Omega_a(x)\leq(\Phi_k^*g)_a(x) \quad\text{on }\Omega\text{ if }g|_\Omega=f.
 \end{align*}
 \underline{Second step:} We denote the right hand side of \eqref{fnorm} by $\|f\|$. We show if the $\Psi\in\mathcal{D}(-K)$ from Lemma \ref{lem:Calderon} satisfies $L_\Psi>a-\alpha_1$, then for every $f\in\DSo$ with $\|f\|<\infty$ the series in \eqref{ExtensionOP} converges in $\SSn$. Furthermore, the limit $\mathcal{E}f$ satisfies
 \begin{align*}
  \mathcal{E}f|_\Omega=f,\quad\mathcal{E}f\in\Fwpxqx\text{ and }\norm{\mathcal{E}f}{\Fwpxqx}\leq c\|f\|. 
 \end{align*}
 Having this proven, we see that this step actually proves Theorem \ref{Thm:ext} and gives us the $\leq$ estimate in \eqref{fnorm} and therefore finishes the proof of Theorem \ref{Thm:Intrinsic} as well.\\
 \underline{Substep 2.1:} We denote by $X=X^{\vek{w},a}_{\p,\q}$ the space of all sequences $(g^j)_{j\in\N_0}$ of measurable functions $g_j:\Rn\to\C$ with $\|(g^j)\|_X=\norm{\left(\sum_{j=0}^\infty|w_jG^j|^\q\right)^{1/\q}}{L_\p(\Rn)}$, where
 \begin{align*}
  G^j(x)=\sup_{y\in\Rn}\frac{g^j(y)}{(1+2^j|x-y|)^a}.
 \end{align*}
 We claim that if $L_\Psi>a-\alpha_1$, then the series $\sum_{j=0}^\infty\Psi_j\ast g^j$ converges in $\SSn$ and we can find a constant $c>0$ such that for any sequence $(g^j)\in X$
 \begin{align}
  \label{estimate}
  \norm{\sum_{j=0}^\infty\Psi_j\ast g^j}{\Fwpxqx}\leq c\|(g^j)\|_X.
 \end{align}
 To prove \eqref{estimate} we can use the same pointwise estimates as in the proof in \cite{RychExt}. By using the moment conditions on $\Phi$ and $\Psi$ we get using Taylors formula and the compact support of $\Phi$ and $\Psi$
 \begin{align}\notag
  |\Phi_l\ast\Psi_j\ast g^j(x)|&\leq I_{l,j}^aG^j(x)
  \intertext{with}
  I_{j,l}^a=\int_\Rn|(\Phi_l\ast\Psi_j)(z)|(1+2^j|z|)^adz&\leq c\begin{cases}\label{Ilj}
                  2^{(l-j)(L_\Psi-a)},\;& \text{ for }j\geq l\\
                  2^{(j-l)L_\Phi},& \text{ for }j\leq l
                 \end{cases}.
 \end{align}
We use the properties of admissible weight sequences and get
\begin{align*}
 w_l(x)\leq cw_j(x)\begin{cases}
		      2^{-\alpha_1(j-l)},\quad&\text{for }j\geq l\\
		      2^{\alpha_2(l-j)},&\text{for }j\leq l
		    \end{cases}
\end{align*}
and obtain with $\delta=\min(L_\Psi-a+\alpha_1,L_\Phi-\alpha_2)>0$
\begin{align}
 \label{est:1}
 w_l(x)|\Phi_l\ast\Psi_j\ast g^j(x)|\leq cw_j(x)2^{-|j-l|\delta}G^j(x).
\end{align}
Now we use the same arguments as in \cite{RychExt} to finish the proof. If $\|(g^j)\|_X<\infty$, then each $g^j$ is a function of most polynomial growth. Therefore we have $\Psi_j\ast g^j\in\SSn$ and with $\widetilde{w}_l(x)=2^{-l2\delta}w_l(x)$ we obtain from \eqref{est:1}
\begin{align*}
 \norm{\psi_j\ast g^j}{{F^{\widetilde{\vek{w}}}}_{\!\!\!\!\!\!\!p(\cdot),q(\cdot)}(\R^n)}&\leq c \norm{\left(\sum_{l=0}^\infty\left|2^{-2l\delta}2^{-|j-l|\delta}w_j(\cdot)G^j(\cdot)\right|^\q\right)^{1/\q}}{L_\p(\Rn)}\\
 &\leq c\left(\sum_{l=0}^\infty\left|2^{-2l\delta}2^{-|j-l|\delta}\right|^{q^-}\right)^{1/q^-}\norm{w_j(\cdot)G^j(\cdot)}{L_\p(\Rn)}\\
 &\leq c 2^{-j\delta}\norm{w_j(\cdot)G^j(\cdot)}{L_\p(\Rn)}\leq c2^{-j\delta}\|(g^j)\|_X,
\end{align*}
where we used $|l-j|\geq j-l$ and $\ell_{q^-}\hookrightarrow\ell_\q$. Hence, $\sum_{j=0}^\infty\Psi_j\ast g^j$ converges in $\SSn$ due to ${F^{\widetilde{\vek{w}}}}_{\!\!\!\!\!\!\!p(\cdot),q(\cdot)}(\R^n)\subset\SSn$ and we get from \eqref{est:1} the estimate
\begin{align}
 \label{est:2}
 w_l(x)\left|\Phi_l\ast\left(\sum_{j=0}^\infty\Psi_j\ast g^j\right)(x)\right|\leq c\sum_{j=0}^\infty2^{-|j-l|\delta}w_j(x)G^j(x).
\end{align}
Now, using Lemma \ref{lem:Hardy} with $h_j(x)=w_j(x)G^j(x)$ we conclude from \eqref{est:2}
\begin{align}
 \label{est:3}
 \norm{\sum_{j=0}^\infty\Psi_j\ast g^j}{\Fwpxqx}\leq c\|(g^j)\|_X.
\end{align}
\underline{Substep 2.2:}
Finally, we argue as follows to apply our general result \eqref{estimate} to the extension operator from Theorem \ref{Thm:ext}. If $x\in\Omega$, then we have $\sup_{y\in\Omega}\frac{|(\Phi_j\ast f)(y)|}{(1+2^j|x-y|)^a}=(\Phi_j^*f)^\Omega_af(x)$ by definition. If $x\notin\bar{\Omega}$ we can construct a point $\widetilde{x}=(x',2\omega(x')-x_n)\in\Omega$ which is symmetric to $x\notin\bar{\Omega}$ with respect to $\partial\Omega$ in the sense $|\widetilde{x}_n-\omega(x')|=|\omega(x')-x_n|$. Then, by $|\widetilde{x}-y|\leq B|x-y|$ for all $y\in\Omega$, with $B$ depending on the Lipschitz constant $A$, we obtain $\sup_{y\in\Omega}\frac{|(\Phi_j\ast f)(y)|}{(1+2^j|x-y|)^a}\leq c (\Phi_j^*f)^\Omega_af(\widetilde{x})$ for $x\notin\bar{\Omega}$. So we have the estimate
\begin{align*}
 \|(\Phi_j\ast f)_\Omega\|_X\leq c\norm{\left(\sum_{k=0}^\infty|w_k(\cdot)(\Phi_k^*f)^\Omega_a(\cdot)|^\q\right)^{1/\q}}{L_\p(\Omega)}\text{ for all }f\in\DSo.
\end{align*}
Combining this with \eqref{estimate}, we have for all $f\in\DSo$ with $\|f\|<\infty$ that $\mathcal{E}f\in\SSn$ and
\begin{align*}
 \norm{\mathcal{E}f}{\Fwpxqx}\leq c\norm{\left(\sum_{k=0}^\infty|w_k(\cdot)(\Phi_k^*f)^\Omega_a(\cdot)|^\q\right)^{1/\q}}{L_\p(\Omega)}.
\end{align*}
Finally, the supports of $\Psi_0$ and $\Psi$ lie within $-K$ and therefore we obtain using Lemma \ref{lem:Calderon}
\begin{align*}
 \mathcal{E}f|_\Omega=\sum_{j=0}^\infty\Psi_j\ast\Phi_j\ast f=f,
\end{align*}
which completes the proof in the F-case.\\
\underline{Third step:} We can use the same reasoning as above for the B-case. The only difference is in the use of Proposition \ref{Prop:LM}, where the condition on $a>0$ is different in the B-case. This also explains now the condition on $L_\Psi$ in Theorem \ref{Thm:ext}, where we have just taken a maximal value for $a>0$.
\end{proof}
It is also possible to get an intrinsic characterization of $\Ao$ by using just the convolutions $\Phi_j\ast f$ instead of the maximal functions $(\Phi_j^*f)^\Omega_a$ as in the local means characterization in Proposition \ref{Prop:LM}.\\
To that end, we introduce the space $\SSo$ as subspace of $\DSo$ by restriction as
\begin{align*}
 \SSo:=\{f\in\DSo&:\exists c_f,M_f>0 \text{ with }|\skalpro{f}{\gamma}|\leq c_f\|\gamma\|_{M_f}\forall\gamma\in\Do\}
 \intertext{where}
 \|\gamma\|_{M_f}&=\sup_{y\in\Omega,|\beta|\leq M_f}|D ^\beta \gamma(y)|(1+|y|)^{M_f}.
\end{align*}
From \cite[Proposition 3.1]{RychExt} we have the following characterization of this class.
\begin{prop}
 We have $f\in\SSo$ if and only if there exists a $g\in\SSn$ such that $g|_\Omega=f$.
\end{prop}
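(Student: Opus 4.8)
The plan is to prove the two implications of the equivalence separately, relying on the corresponding statement for the full space $\Rn$, namely the characterization of $\mathcal{S}'(\Rn)$ as the set of tempered distributions of at most polynomial growth. For the direction ``there exists $g\in\SSn$ with $g|_\Omega=f$ implies $f\in\SSo$'', I would simply restrict the defining estimate: if $g\in\SSn$, then $|\langle g,\varphi\rangle|\le c_g\|\varphi\|_{N}$ for some seminorm on $\Sn$; now given $\gamma\in\Do$, extend it by zero to a function in $\Sn$ (possible since $\gamma$ has compact support inside $\Omega$), observe that $\|\gamma_{\mathrm{ext}}\|_N$ is controlled by $\|\gamma\|_{M}$ with $M=N$ computed over $\Omega$ only, and conclude $|\langle f,\gamma\rangle|=|\langle g,\gamma_{\mathrm{ext}}\rangle|\le c_f\|\gamma\|_{M_f}$. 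This direction is essentially bookkeeping.

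For the converse — the substantive direction — I would start from $f\in\SSo$ with controlling constants $c_f,M_f$ and \emph{construct} an explicit extension using the Calderon reproducing formula from Lemma \ref{lem:Calderon} together with the zero-extension device already used in Theorem \ref{Thm:ext}. Concretely, pick $\Phi_0\in\mathcal{D}(-K)$ and build $\Phi,\Psi_0,\Psi\in\mathcal{D}(-K)$ as in Lemma \ref{lem:Calderon} (choosing $L_\Psi$ large), so that $f=\sum_{j\ge0}\Psi_j\ast\Phi_j\ast f$ in $\DSo$. Then set $g=\sum_{j\ge0}\Psi_j\ast(\Phi_j\ast f)_\Omega=\mathcal{E}f$, exactly the operator \eqref{ExtensionOP}. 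Since the supports of $\Psi_0,\Psi$ lie in $-K$, the same argument as in Substep 2.2 of the previous proof gives $g|_\Omega=f$; what remains is to check $g\in\SSn$, i.e. that $g$ is a tempered distribution of polynomial growth rather than needing the $F$- or $B$-space machinery.

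The key step, and the main obstacle, is this polynomial-growth bound. The estimate $|\langle f,\gamma\rangle|\le c_f\|\gamma\|_{M_f}$ means that each convolution $(\Phi_j\ast f)(y)=\langle f,\Phi_j(y-\cdot)\rangle$ is pointwise bounded by $c_f\,2^{jK}(1+|y|)^{M_f}$ for $y\in\Omega$ and some $K$ depending on $M_f$ and the order of $\Phi$; hence $(\Phi_j\ast f)_\Omega$ is a function of at most polynomial growth (with a controlled $j$-dependent constant), so $\Psi_j\ast(\Phi_j\ast f)_\Omega\in\SSn$ termwise. To sum the series in $\SSn$ I would use the moment conditions: the pointwise bound \eqref{est:1}-type estimate — or rather the bound on $I^a_{j,l}$ from \eqref{Ilj} applied with a test function in place of $\Phi_l$ — shows that pairing $\Psi_j\ast(\Phi_j\ast f)_\Omega$ against a fixed $\varphi\in\Sn$ produces a factor $2^{-j(L_\Psi-N)}$ for some fixed $N$ coming from the seminorm of $\varphi$, which is summable once $L_\Psi$ is chosen larger than $N$ plus the polynomial-growth exponent. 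This gives absolute convergence of $\sum_j\langle \Psi_j\ast(\Phi_j\ast f)_\Omega,\varphi\rangle$ with a bound by a single Schwartz seminorm of $\varphi$, hence $g\in\SSn$.

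One subtlety to flag: the series defining $g$ need not be the \emph{only} extension, and the argument above produces one specific $g$; that is enough for the ``only if'' direction since we just need existence. I would also note that the whole argument is independent of $p,q$ and $\vek{w}$ — it uses only the qualitative polynomial-growth information encoded in $\SSo$ — so no $\log$-Hölder or admissibility hypotheses enter here, matching the statement of the proposition. Finally, I would remark that combining this proposition with Theorems \ref{Thm:ext} and \ref{Thm:Intrinsic} shows the intrinsic norms are finite precisely on distributions that extend to $\SSn$, which is the natural ambient space.
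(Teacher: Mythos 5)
Your argument is correct and follows essentially the same route as the proof the paper relies on: the paper gives no proof of its own but cites \cite[Proposition 3.1]{RychExt}, and Rychkov's proof is exactly your scheme --- the trivial restriction argument for one direction, and for the other the extension $g=\sum_j\Psi_j\ast(\Phi_j\ast f)_\Omega$ from Lemma \ref{lem:Calderon}, with convergence in $\SSn$ obtained from the polynomial bound $|(\Phi_j\ast f)(y)|\le c_f2^{jK}(1+|y|)^{M_f}$ and the moment conditions on $\Psi$, choosing $L_\Psi$ large depending on $M_f$. Your flagged subtleties (non-uniqueness of the extension, independence of $p,q,\vek{w}$) are both correct.
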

\begin{rem}
Since all appearing function spaces $\Ao$ are also defined by restriction we have $\Ao\subset\SSo$. Therefore, the proposition above shows that it is no restriction to use $f\in\SSo$ instead of $f\in\DSo$. 
\end{rem}

Furthermore, we also need another lemma which can be seen as the replacement for the boundedness of the Hardy-Littlewood maximal operator which is of no use in our variable exponent spaces. We refer to \cite{DHR} and \cite{AH} for the proofs of this lemma.
\begin{lem}\label{lem:Maxop}
Let $p,q\in\Plog$ and $\eta_{\nu,m}(x)=2^{n\nu}(1+2^\nu|x|)^{-m}$.
\begin{enumerate}
 \item If $p^-\geq1$ and $m>n+c_{\log}(1/q)$, then there exists a constant $c>0$ such that for all sequences $(f_\nu)_{\nu\in\N_0}\in\ell_\q(L_\p(\Rn))$
 \begin{align*}
  \norm{f_\nu\ast\eta_{\nu,m}}{\ell_\q(L_\p(\Rn))}\leq c\norm{f_\nu}{\ell_\q(L_\p(\Rn))}.
 \end{align*}
  \item If $1<p^-\leq p^+<\infty$ and $1<q^-\leq q^+<\infty$ and $m>n$, then there exists a constant $c>0$ such that for all sequences $(f_\nu)_{\nu\in\N_0}\in L_\p(\ell_\q(\Rn))$
 \begin{align*}
  \norm{f_\nu\ast\eta_{\nu,m}}{L_\p(\ell_\q(\Rn))}\leq c\norm{f_\nu}{L_\p(\ell_\q(\Rn))}.
 \end{align*}
\end{enumerate}

\end{lem}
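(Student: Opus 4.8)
The two parts are proved by different mechanisms: in (ii) the Hardy--Littlewood maximal operator $\HLM$ is available, whereas in (i) it is not, since $p^-$ is only assumed $\geq 1$.

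\emph{Part (ii).} The plan is two steps. First, since $m>n$ the kernel $(1+|\cdot|)^{-m}$ is a radially decreasing integrable function, so by the classical pointwise domination of approximate-identity convolutions against such kernels,
\begin{align*}
 (f_\nu\ast\eta_{\nu,m})(x)\leq c\,(\HLM f_\nu)(x),\qquad x\in\Rn,
\end{align*}
with $c$ independent of $\nu$. Second, one invokes the vector-valued (Fefferman--Stein) maximal inequality in variable exponent spaces, $\norm{(\HLM f_\nu)_\nu}{L_\p(\ell_\q(\Rn))}\leq c\norm{(f_\nu)_\nu}{L_\p(\ell_\q(\Rn))}$, which is known to hold precisely when $1<p^-\leq p^+<\infty$ and $1<q^-\leq q^+<\infty$. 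Chaining the two estimates gives (ii).

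\emph{Part (i).} Here the maximal-operator route is blocked (for $p^-=1$ even the scalar $\HLM$ need not be bounded on $L_\p$), so I would follow the modular method of Diening--H\"ast\"o--Roudenko and Almeida--H\"ast\"o. After reducing to nonnegative $f_\nu$, the two pillars are: (a) the semigroup-type estimate $\eta_{\nu,m}\ast\eta_{\nu,m}\leq c\,\eta_{\nu,m}$ for $m>n$, a standard kernel computation, which lets one factor an auxiliary $\eta$-average into the convolution; and (b) the exponent-freezing lemma for exponents with $1/q\in C^{\log}$, which over a ball of radius $\approx 2^{-\nu}$ allows replacing $q(y)$ by $q(x)$ at the cost of a factor of order $(1+2^\nu|x-y|)^{c_{\log}(1/q)}$. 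Working directly on the modular $\varrho_{\ell_\q(L_\p)}$, one applies H\"older's inequality inside the convolution integral, uses (a) to move the $\eta$-average onto a power of $f_\nu$ and (b) to unfreeze the inner exponent; the freezing error is absorbed because $m-c_{\log}(1/q)>n$, so that $\eta_{\nu,m-c_{\log}(1/q)}$ is still integrable with a bound uniform in $\nu$. Reassembling, one obtains $\varrho_{\ell_\q(L_\p)}\bigl((f_\nu\ast\eta_{\nu,m})/\mu\bigr)\leq c\,\varrho_{\ell_\q(L_\p)}\bigl((f_\nu)/\mu\bigr)$ for suitable $\mu$, hence the norm inequality.

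\emph{Main obstacle.} The heart of the matter is part (i). Because the $\ell_\q(L_\p)$ norm is formed in the ``unfavourable'' order, one cannot carry the convolution into the $L_\p$-slot and quote a scalar maximal theorem, so the whole argument must be run through the modular; the real work is then the bookkeeping that shows the loss incurred by freezing the variable inner exponent $q$ over $2^{-\nu}$-balls is exactly compensated by the surplus decay $m-n>c_{\log}(1/q)$ of the kernel. Part (ii) is, by comparison, immediate once the pointwise bound by $\HLM$ and the known vector-valued maximal inequality are invoked.
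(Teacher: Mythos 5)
The paper does not prove this lemma at all --- it cites \cite{DHR} for part (ii) and \cite{AH} for part (i) --- so the comparison is with the proofs in those references. Your outline of part (i) is essentially the argument of Almeida--H\"ast\"o (their $\eta$-convolution lemma in $\ell_{\q}(L_\p)$): one works on the modular, freezes the inner exponent $q$ over $2^{-\nu}$-balls using $1/q\in C^{\log}$, and absorbs the loss into the surplus decay $m-n>c_{\log}(1/q)$. That part is in the right spirit, though as you say the bookkeeping is the real content.

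Part (ii), however, contains a genuine gap. The vector-valued Fefferman--Stein inequality
\begin{align*}
 \norm{(\HLM f_\nu)_\nu}{L_\p(\ell_\q(\Rn))}\leq c\,\norm{(f_\nu)_\nu}{L_\p(\ell_\q(\Rn))}
\end{align*}
is \emph{false} when $q(\cdot)$ is non-constant, even for smooth exponents with $1<q^-\leq q^+<\infty$ and constant $p$; a counterexample is given in \cite{DHR}. This failure is exactly the reason the lemma exists: the text immediately preceding it describes the $\eta_{\nu,m}$-estimate as ``the replacement for the boundedness of the Hardy--Littlewood maximal operator which is of no use in our variable exponent spaces.'' The pointwise domination $f_\nu\ast\eta_{\nu,m}\leq c\,\HLM f_\nu$ is correct but too lossy --- it discards the scale information $2^{-\nu}$ carried by $\eta_{\nu,m}$, which is precisely what makes the convolution inequality salvageable while the maximal inequality is not. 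The correct proof (Theorem 3.2 of \cite{DHR}) estimates the convolution directly, again via exponent-freezing and H\"older/modular arguments at scale $2^{-\nu}$, using only the \emph{scalar} boundedness of $\HLM$ on $L_{\p/r}$ for suitable $r$. So part (ii) needs to be rerun through a modular argument analogous to the one you sketched for part (i), rather than reduced to a vector-valued maximal theorem that does not hold.
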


Now, the local means intrinsic characterization for the spaces $\Ao$ reads as follows.
\begin{Thm}\label{Thm:LMo}
Let $\Phi_0\in\mathcal{D}(-K)$ be given with $\int\Phi_0(x)dx\neq0$ and $L_\Phi>\alpha_2$. Further, let $p,q\in\Plog$ and $(w_j)\in\mgk$. 
\begin{enumerate}
 \item For all $f\in\SSo$ we have
 \begin{align*}
  \norm{f}{\Bo}\approx\norm{\left(w_k(\Phi_k\ast f)(\cdot)\right)_{k\in\N_0}}{\ell_\q(L_\p(\Omega))}
 \end{align*}
 \item For $p^+,q^+<\infty$ and all $f\in\SSo$ we have
 \begin{align*}
  \norm{f}{\Fo}&\approx\norm{\left(\sum_{k=0}^\infty|w_k(\cdot)(\Phi_k\ast f)(\cdot)|^\q\right)^{1/\q}}{L_\p(\Omega)}.
 \end{align*}
\end{enumerate}		
 
\end{Thm}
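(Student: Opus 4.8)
The plan is to deduce Theorem \ref{Thm:LMo} from the already-proven Theorem \ref{Thm:Intrinsic} by sandwiching the plain convolution quantity $w_k(\Phi_k\ast f)$ between two things we control: from below it is dominated pointwise by the Peetre maximal quantity $w_k(\Phi_k^*f)^\Omega_a$, and from above we must bound the Peetre maximal function back in terms of the convolutions. The lower bound is immediate, since $|(\Phi_k\ast f)(x)|\leq(\Phi_k^*f)^\Omega_a(x)$ for all $x\in\Omega$ (take $y=x$ in the supremum), so by monotonicity of the $\ell_\q(L_\p(\Omega))$ and $L_\p(\ell_\q(\Omega))$ (quasi-)norms and Theorem \ref{Thm:Intrinsic} we get $\norm{(w_k(\Phi_k\ast f))_k}{\ell_\q(L_\p(\Omega))}\leq c\norm{f}{\Bo}$ and likewise in the F-case. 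The content is therefore the reverse inequality.

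For the reverse direction I would run the standard Peetre-maximal-function argument of Fefferman–Stein type, adapted to the domain as in \cite{RychExt}. First fix $f\in\SSo$; by the Calderon reproducing formula (Lemma \ref{lem:Calderon}) choose $\Psi_0,\Psi\in\mathcal{D}(-K)$ with $L_\Psi$ large (larger than the relevant threshold $a-\alpha_1$, with $a$ chosen as in Theorem \ref{Thm:Intrinsic}) so that $\Phi_k\ast f=\sum_{j=0}^\infty\Phi_k\ast\Psi_j\ast(\Phi_j\ast f)$ in $\DSo$ — note all convolutions stay inside $\Omega$ because the kernels are supported in $-K$. Using the same pointwise kernel estimate \eqref{Ilj}–\eqref{est:1} (moment conditions on $\Phi$, $\Psi$ plus compact support and Taylor's formula) one obtains, for $y\in\Omega$,
\begin{align*}
 |(\Phi_k\ast f)(y)|\leq c\sum_{j=0}^\infty 2^{-|j-k|\delta}\int_\Omega \eta_{j,m}(y-z)\,|(\Phi_j\ast f)(z)|\,dz
\end{align*}
for suitable $m>n+c_{\log}(1/q)$ (respectively $m>n$ in the F-case with $p^-,q^->1$) and $\delta>0$; dividing by $(1+2^k|x-y|)^a$ and using $(1+2^k|x-y|)^{-a}\leq c\,2^{|j-k|a}(1+2^j|x-z|)^{-a}(1+2^j|z-y|)^{a}$ together with a shift to the reflected point $\widetilde z\in\Omega$ when $z\notin\bar\Omega$ (exactly as in Substep 2.2) lets one absorb the supremum over $y$ into a convolution against $\eta_{j,m}$ of the full-space extension $((\Phi_j\ast f)_\Omega$-type quantity, at the cost of enlarging $\delta$. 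After multiplying by $w_k$ and shifting weights via the admissibility inequalities $w_k\leq c\,w_j\,2^{|j-k|\max(-\alpha_1,\alpha_2)}$ one reaches
\begin{align*}
 w_k(\Phi_k^*f)^\Omega_a(x)\leq c\sum_{j=0}^\infty 2^{-|j-k|\delta'}\big(w_j|\Phi_j\ast f|\big)_\Omega\ast\eta_{j,m}(x),\quad x\in\Omega,
\end{align*}
with $\delta'>0$. Then apply Lemma \ref{lem:Maxop} to handle the convolution with $\eta_{j,m}$ (this is where $p^-\geq1$ resp. $1<p^-,q^-$ enters in the two cases; for $p^-<1$ one first raises everything to a power $t<\min(p^-,q^-,1)$ so that $p/t,q/t$ have lower bound $>1$, a routine device used throughout this circle of papers) and Lemma \ref{lem:Hardy} to sum the geometric factor $2^{-|j-k|\delta'}$ in $k$, yielding $\norm{(w_k(\Phi_k^*f)^\Omega_a)_k}{\ell_\q(L_\p(\Omega))}\leq c\norm{(w_j(\Phi_j\ast f))_j}{\ell_\q(L_\p(\Omega))}$ and the analogous F-space estimate. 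Combined with Theorem \ref{Thm:Intrinsic} this gives $\norm{f}{\Ao}\leq c\norm{(w_k(\Phi_k\ast f))_k}{\cdots}$, i.e. the missing direction; both estimates together are the claimed equivalence.

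The main obstacle I expect is the bookkeeping in the reverse estimate: one has to keep every convolution inside $\Omega$ (ensured by $\supp\Psi_0,\Psi\subset-K$), control the reflection argument for points $z\notin\bar\Omega$ uniformly in $j$, and choose $L_\Psi$, $a$, $m$, and the auxiliary power $t$ so that simultaneously Lemma \ref{lem:Calderon}, Lemma \ref{lem:Maxop}, Lemma \ref{lem:Hardy}, and the admissibility of $\vek w$ all apply with a strictly positive remaining exponent $\delta'$. None of these steps is genuinely new — each mirrors a step already carried out in the proof of Theorems \ref{Thm:ext}/\ref{Thm:Intrinsic} or in \cite{Kem09,KemV12,RychExt} — so the proof should largely consist of citing Theorem \ref{Thm:Intrinsic} for one inequality and assembling the standard maximal-function machinery for the other, with the reflection trick from Substep 2.2 being the only domain-specific ingredient.
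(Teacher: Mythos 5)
Your overall architecture coincides with the paper's: the easy inequality follows from $|(\Phi_k\ast f)(x)|\leq(\Phi_k^*f)^\Omega_a(x)$ together with Theorem \ref{Thm:Intrinsic}, and the hard inequality is reduced to a pointwise bound of $(\Phi_j^*f)^\Omega_a$ by convolutions of the plain means against $\eta_{k,m}$, finished off with Lemma \ref{lem:Maxop} and Lemma \ref{lem:Hardy}. For $\min(p^-,q^-)>1$ your first-power estimate would indeed close the argument.

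There is, however, a genuine gap in how you treat the (generic) case $\min(p^-,q^-)\leq 1$. You derive the first-power inequality
\begin{align*}
w_k(\Phi_k^*f)^\Omega_a(x)\leq c\sum_{j}2^{-|j-k|\delta'}\bigl(w_j|\Phi_j\ast f|\bigr)_\Omega\ast\eta_{j,m}(x)
\end{align*}
and then propose to ``raise everything to a power $t<\min(p^-,q^-,1)$''. This step fails: for $t<1$ one has $\bigl(\int g\bigr)^t\geq$ nothing useful, i.e.\ $\bigl(g\ast\eta_{j,m}\bigr)^t$ is \emph{not} dominated by $g^t\ast\eta_{j,mt}$ (Jensen's inequality goes the wrong way), so the power cannot be pushed inside the convolution after the fact. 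What is actually needed — and what the paper uses — is Rychkov's formula (3.4), reproduced as \eqref{est:3.4}, in which the exponent $r$ sits \emph{inside} the integral from the start:
\begin{align*}
|(\Phi_j\ast f)(x)|^r\leq c\sum_{k=j}^\infty2^{(j-k)L_\Psi r}2^{kn}\int_\Omega\frac{|(\Phi_k\ast f)(y)|^r}{(1+2^j|x-y|)^{ar}}\,dy.
\end{align*}
This estimate is not obtained by exponentiating a first-power inequality; it requires the Str\"omberg--Torchinsky/Rychkov bootstrapping argument (writing $|\Phi_k\ast f|=|\Phi_k\ast f|^{1-r}|\Phi_k\ast f|^{r}$, bounding the first factor by the Peetre maximal function, and absorbing that maximal function using its a priori finiteness). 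The paper imports it wholesale from \cite{RychExt}; your proposal contains no substitute for it, and the substitute you name would not work. A second, minor discrepancy: the paper's estimate is one-sided (sum over $k\geq j$, exploiting only the moments of $\Psi$), whereas yours is two-sided; this is harmless but means your choice of $\delta'$ must also account for $L_\Phi-\alpha_2$, which you should state. Once \eqref{est:3.4} is in hand, your remaining steps (division by $(1+2^j|x-z|)^{ar}$, the weight shift via admissibility, $\eta_{k,(a-\alpha)r}$, and Lemmas \ref{lem:Hardy} and \ref{lem:Maxop} applied in $L_{\p/r}(\ell_{\q/r})$ resp.\ $\ell_{\q/r}(L_{\p/r})$) match the paper exactly; note also that the paper does not need the reflection trick here, since all points stay in $\Omega$.
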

\begin{proof}
Clearly, we want to take the intrinsic norm given in Theorem \ref{Thm:Intrinsic} as a starting point. To use this characterization we need $L_\Phi>\alpha_2$ and choose suitable functions $\Psi_0,\Psi$ which fulfill \eqref{lem:Calderon} with $L_\Psi>a-\alpha_1$. Furthermore, we take the $a>0$ as large as needed in Theorem \ref{Thm:Intrinsic}.\\
\underline{First step:} The $\geq$ inequality follws easily by observing $(\Phi_k^* f)_a^\Omega(x)\geq (\Phi_k\ast f)(x)$.\\
\underline{Second step:} One way to prove the $\leq$ inequality  would be to consult the proof of \cite[Theorem 13]{KemV12} and to modify everything from $\Rn$ to $\Omega$. Instead we use formula (3.4) in \cite{RychExt}
 \begin{align}
  \label{est:3.4}
  |(\Phi_j\ast f)(x)|^r\leq c\sum_{k=j}^\infty2^{(j-k)L_\Psi r}2^{kn}\int_\Omega\frac{|(\Phi_k\ast f)(y)|^r}{(1+2^j|x-y|)^{ar}}dy
 \end{align}
 which was obtained by pointwise manipulations only. Here $r>0$ and the constant $c>0$ is independent of $f\in\SSo$, $x\in\Omega$ and $j\in\N_0$.\\
 Now, dividing \eqref{est:3.4} by $(1+2^j|x-z|)^{ar}$ and using on the left hand side $1+2^j|y-z|\leq(1+2^j|x-z|)(1+2^j|x-y|)$ gives us by taking the supremum with respect to $x\in\Omega$
 \begin{align*}
  \left((\Phi_j^*f)^\Omega_a(z)\right)^r\leq c\sum_{k=j}^\infty2^{(j-k)L_\Psi r}2^{kn}\int_\Omega\frac{|(\Phi_k\ast f)(y)|^r}{(1+2^j|y-z|)^{ar}}dy
 \end{align*}
 We multiply with $w_j(z)^r$ and use the estimates $(1+2^k|y-z|)^{ar}\leq2^{(k-j)ar}(1+2^j|y-z|)^{ar}$ and $w_j(z)\leq \Konst 2^{(j-k)\alpha_1}w_k(y)(1+2^k|y-z|)^\alpha$ and obtain
 \begin{align}
  \notag
  \left(w_j(z)(\Phi_j^*f)^\Omega_a(z)\right)^r&\leq c\sum_{k=j}^\infty2^{(j-k)(L_\Psi-a+\alpha_1) r}2^{kn}\int_\Omega\frac{w_k^r(y)|(\Phi_k\ast f)(y)|^r}{(1+2^k|y-z|)^{(a-\alpha)r}}dy
  \intertext{which can be rewritten with $\delta=L_\Psi-a+\alpha_1>0$ in}
    \left(\chi_\Omega(z)w_j(z)(\Phi_j^*f)^\Omega_a(z)\right)^r&\leq c\sum_{k=j}^\infty2^{(j-k)\delta r}\left[\left(\chi_\Omega w_k(\Phi_k\ast f)\right)^r\ast\eta_{k,(a-\alpha)r}\right](z).\label{est:4}
 \end{align}
 Now, we use the usual procedure to end the proof. In the F-case we choose $r>0$ with $\frac{n}{a-\alpha}<r<\min(p^-,q^-)$. This is possible due to the conditions of the theorem and we get $p/r,q/r\in\Plog$ with $1<p^-/r\leq p^+/r<\infty$, $1<q^-/r\leq q^+/r<\infty$. Applying the $L_{\p/r}(\ell_{\q/r}(\Rn))$ norm on \eqref{est:4} we conclude by using Lemmas \ref{lem:Hardy} and \ref{lem:Maxop} 
  \begin{align*}
   \norm{w_j(z)(\Phi_j^*f)^\Omega_a(z)}{L_\p(\ell_{\q}(\Omega))}^r&=\norm{\left(\chi_\Omega(z)w_j(z)(\Phi_j^*f)^\Omega_a(z)\right)^r}{L_{\p/r}(\ell_{\q/r}(\Rn))}\\
   &\leq c\norm{\left(\chi_\Omega w_k(\Phi_k\ast f)\right)^r\ast\eta_{k,(a-\alpha)r}}{L_{\p/r}(\ell_{\q/r}(\Rn))}\\
   &\leq c\norm{\left(\chi_\Omega w_k(\Phi_k\ast f)\right)^r}{L_{\p/r}(\ell_{\q/r}(\Rn))}\\
   &= c\norm{ w_k(\Phi_k\ast f)}{L_{\p}(\ell_{\q}(\Omega))}^r.
  \end{align*}
  This finishes the proof in the F-case using Theorem \ref{Thm:Intrinsic}. In the B-case the same reasoning by taking the $\ell_{\q/r}(L_{\p/r}(\Rn))$ norm of \eqref{est:4} works; only the parameter $r>0$ has to be chosen as
  \begin{align*}
   \frac{n+c_{\log}(1/q)}{a-\alpha}<r<p^-.
   \end{align*}
\end{proof}

\section{A universal extension operator}\label{sec:5}
The extension operator $\mathcal{E}$ from Theorem \ref{Thm:ext} has the serious drawback that it only works for special values of $\p,\q$ and $\alpha_1,\alpha_2,\alpha$. This is due to the fact that all conditions depend on the number of moments we have for the functions $\Phi$ and $\Psi$. More precisely, we know that for fixed numbers of moments $L_\Phi,L_\Psi$ the extension operator works for 
\begin{align*}
 L_\Phi>\alpha_2\qquad\text{and}\qquad L_\Psi>\frac{n+c_{\log}(1/q)}{\min(p^-,q^-)}+\alpha-\alpha_1.
\end{align*}
A good try to widen this region would be to choose $\Phi,\Psi\in\mathcal{D}(-K)$ with $L_\Psi=L_\Phi=\infty$, but clearly this is impossible. Fortunately, this can be done if $\Phi,\Psi\in\mathcal{S}(\Rn)$ which are not compactly supported in $-K$, but have support in $-K$ and rapid decay at infinity.
\begin{Thm}
 \label{Thm:Universal}
 \begin{enumerate}
  \item\label{romeins} There exist functions $\Phi_0,\Phi,\Psi_0,\Psi\in\Sn$ with supports in $K=\{(x',x_n)\in\Rn:|x'|<A^{-1}x_n\}$ with $L_\Psi=L_\Phi=\infty$ and 
  \begin{align*}
   f=\sum_{k=0}^\infty\Psi_k\ast\Phi_k\ast f\qquad\text{holds for all }f\in\SSo.
  \end{align*}
  \item The map $\mathcal{E}_u:\SSo\to\SSn$ defined with the functions from \ref{romeins} by
  \begin{align*}
   f\mapsto\sum_{k=0}^\infty\Psi_k\ast(\Phi_k\ast f)_\Omega
  \end{align*}
  yields a linear bounded extension operator from $\Ao$ to $\Awpxqx$ for all admissible exponents $\p,\q$ and $(w_j)\in\mgk$.

 \end{enumerate}

\end{Thm}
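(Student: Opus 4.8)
The plan is to reduce the universal case to the already-established machinery of Theorems~\ref{Thm:ext} and \ref{Thm:Intrinsic}, the only genuinely new ingredient being the construction of Schwartz functions supported in the half-cone $K$ that have \emph{all} moments vanishing and still satisfy a Calder\'on-type reproducing identity. First I would construct $\Phi_0$. Start from a nonzero $\psi_0\in\mathcal{D}(-K)$ with $\int\psi_0\neq0$ and consider $\Phi_0$ whose Fourier transform is a smooth function, nonvanishing near the origin, whose Taylor expansion at $0$ we can control; concretely one builds $\Phi_0$ so that $\hat\Phi_0$ is flat to infinite order in a suitable sense after forming $\Phi(x)=\Phi_0(x)-2^{-n}\Phi(x/2)$ (equivalently $\hat\Phi(\xi)=\hat\Phi_0(\xi)-\hat\Phi_0(2\xi)$), which forces $D^\beta\hat\Phi(0)=0$ for all $\beta$, i.e.\ $L_\Phi=\infty$. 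The support-in-$-K$ condition is preserved under the dilation/difference operations, so $\Phi\in\mathcal S(\Rn)$ with $\supp\Phi\subset -K$. This is essentially Rychkov's construction in \cite{RychExt}, performed with $\mathcal S$ in place of $\mathcal D$, so I would cite that and only indicate the modification.

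Next I would produce $\Psi_0,\Psi\in\mathcal S(\Rn)$ supported in $-K$ with $L_\Psi=\infty$ and $f=\sum_{k\ge0}\Psi_k\ast\Phi_k\ast f$ in $\SSo$. Here the scheme of Lemma~\ref{lem:Calderon} applies verbatim: given the $\Phi$'s, one solves the identity on the Fourier side, $\sum_k\hat\Psi_k(\xi)\hat\Phi_k(\xi)=1$, by choosing $\hat\Psi$ to cancel the tail, and the fact that $\hat\Phi$ vanishes to infinite order at $0$ lets one arrange $\hat\Psi$ flat at $0$ as well, giving $L_\Psi=\infty$; the support condition $\supp\Psi\subset -K$ is again kept through the construction because it is a statement about a convex cone stable under the relevant operations. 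The convergence of the series in $\SSo$ is exactly the content of Lemma~\ref{lem:Calderon}'s proof and needs no change. This establishes part~\eqref{romeins}.

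For part (ii), I would simply re-run the proof of Theorems~\ref{Thm:ext} and \ref{Thm:Intrinsic} with these new $\Phi,\Psi$. The only places compact support of $\Phi,\Psi$ was used were: (a) the pointwise kernel bound \eqref{Ilj} for $I_{j,l}^a=\int_\Rn|(\Phi_l\ast\Psi_j)(z)|(1+2^j|z|)^a\,dz$, and (b) the well-definedness of $(\gamma\ast f)(x)$ for $\gamma\in\mathcal D(-K)$, $f\in\DSo$. For (a), with $L_\Phi=L_\Psi=\infty$ one still gets $I_{j,l}^a\le c_N 2^{-N|j-l|}$ for every $N$ — now using the Schwartz decay of $\Phi,\Psi$ rather than compact support, together with the moment conditions via Taylor's formula — so the constant $\delta$ in \eqref{est:1}--\eqref{est:3} can be taken arbitrarily large; in particular all the smallness conditions $L_\Phi>\alpha_2$, $L_\Psi>\tfrac{n+c_{\log}(1/q)}{\min(p^-,q^-)}+\alpha-\alpha_1$ are automatically satisfied for every admissible $\p,\q,\alpha,\alpha_1,\alpha_2$. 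For (b), since $\supp\Psi_k\subset -K$ and $x+K\subset\Omega$ for $x\in\Omega$, the function $(\Phi_k\ast f)_\Omega$ is extended by zero and convolved against $\Psi_k$ which is supported in $-K$, so $\mathcal E_u f|_\Omega=\sum_k\Psi_k\ast\Phi_k\ast f=f$ follows from part~\eqref{romeins} exactly as in Substep~2.2 — one needs here that even with noncompact $\Psi_k$ the restriction to $\Omega$ only sees the part of $(\Phi_k\ast f)_\Omega$ on $\Omega$, which holds because $\Psi_k(x-\cdot)$ is supported in $x-(-K)=x+K\subset\Omega$. The boundedness $\norm{\mathcal E_u f}{\Awpxqx}\le c\norm{f}{\Ao}$ then drops out of \eqref{estimate} and Substep~2.2 word for word, and the universality — independence of the operator from $\p,\q,\alpha,\alpha_1,\alpha_2$ — is clear since $\Phi_0,\Phi,\Psi_0,\Psi$ were fixed once and for all.

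The main obstacle I anticipate is purely the construction in part~\eqref{romeins}: producing functions that are simultaneously (i) Schwartz, (ii) supported in the cone $-K$, and (iii) possessing infinitely many vanishing moments. Conditions (ii) and (iii) are in tension — a nonzero function supported in a proper cone cannot have compact support, so one really needs the Schwartz framework, and one must check that the iterative Fourier-side construction of $\hat\Psi$ does not destroy the cone support when transported back to $x$-space. I expect this to be handled exactly as in \cite[Section~4]{RychExt} (or the relevant lemma there), where Rychkov carries out precisely this construction; the remainder of the proof is a routine bookkeeping check that nothing in Sections~\ref{sec:4} used compact support in an essential way beyond the kernel estimate \eqref{Ilj}, which survives with decay estimates replacing support.
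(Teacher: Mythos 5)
Your proposal follows essentially the same route as the paper: the paper likewise defers the construction of the cone-supported Schwartz functions with infinitely many vanishing moments to Rychkov's Theorem 4.1 (a modification of Stein's function), and then observes that the only step in the proof of Theorem \ref{Thm:ext} needing adjustment is the kernel estimate \eqref{Ilj}, for which it invokes \cite[Lemma 2.1]{BuiPT} in place of your direct Taylor-plus-rapid-decay argument — the same content. (The discrepancy between your $-K$ and the theorem's $K$ is a sign-convention slip in the paper's statement, not in your argument.)
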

The proof of this theorem can be copied word by word from the proof of \cite[Theorem 4.1]{RychExt}. The crucial part there is to construct the needed functions $\Phi_0,\Phi,\Psi_0,\Psi\in\Sn$ with supports in $K=\{(x',x_n)\in\Rn:|x'|<A^{-1}x_n\}$ with $L_\Psi=L_\Phi=\infty$ which consists in a modification of Stein's function\cite[§ VI.3]{Stein}. Finally, with that functions satisfying Calderon's reproducing formula one has to revisit the proof of Theorem \ref{Thm:ext}. Actually, there is only one difficulty to overcome: we estimated in \eqref{Ilj} by using the compact support of the functions $\Phi_0,\Phi,\Psi_0,\Psi\in\Sn$. Since we do not have any compact support of these functions anymore we have to use \cite[Lemma 2.1]{BuiPT} and  the same estimate \eqref{Ilj} can be achieved.

\end{document}